\newtheorem{theorem}{Theorem}[section]
\newtheorem{theorem_nn}{Theorem}
\newtheorem{cor}[theorem]{Corollary}
\newtheorem{lemma}[theorem]{Lemma}
\newtheorem{prop}[theorem]{Proposition}
\theoremstyle{definition}
\newtheorem{definition}[theorem]{Definition}
\newtheorem{example}[theorem]{Example}
\newtheorem{question}{Question}
\newcommand{\Aut}{\operatorname{Aut}}
\newcommand{\Sym}{\operatorname{Sym}}
\newcommand{\St}{\operatorname{St}}
\newcommand{\rst}{\operatorname{rst}}
\definecolor{magenta}{RGB}{203,0,150}
\definecolor{blueish}{RGB}{0,35,211}
\definecolor{greenish}{RGB}{0,100,20}
\newcommand{\h}{\hspace{2mm}}  
\title{Conjugacy Growth and Conjugacy Width\\ of Certain Branch Groups}
\date{\today}
\author{Elisabeth Fink\footnote{This work is supported by the ERC starting grant 257110 ``RaWG''}}
\begin{document}

\selectlanguage{british}

\maketitle

\begin{abstract}
The conjugacy growth function counts the number of distinct conjugacy classes
in a ball of radius $n$. We give a lower bound for the conjugacy growth of certain branch groups, among them the Grigorchuk
group. This bound is a function of intermediate growth.
We further proof that certain branch groups have the property that
every element can be expressed as a product of uniformly boundedly many conjugates of the generators. We call this property
bounded conjugacy width. We also show how bounded conjugacy width relates to other algebraic properties of groups and apply these
results to study the palindromic width of some branch groups.
\end{abstract}

\begin{center}
\textbf{\small{Keywords: branch groups, conjugacy growth.\\MSC classification: 20F65, 20F69.}}
\end{center}

\section{Introduction}

The conjugacy growth function of a
group was first introduced by I. K. Babenko in \cite{babenko} to
study geodesic growth of Riemannian manifolds. It counts the number of distinct conjugacy classes in a ball of radius $n$. This
function had already intensively been studied for manifolds, among others by G. Margulis (\cite{margulis}) who obtained results in
the
case of negatively curved manifolds. These results have been generalized by T. Roblin (\cite{roblin}) to any quotient of a CAT(-1)
metric space and further by I. Gekhtman (\cite{gekhtman}) to elements of mapping class groups. It was shown by E. Breuillard and
Y. Cornulier in \cite{breuillard_corn} that the conjugacy growth
function
of a solvable group is either polynomially bounded or exponential. Recently, M. Hull and D. Osin proved in
\cite{hull_osin}
that for any 'sensible' function $f(n)$, there exists a finitely generated group such that it has
conjugacy growth exactly $f(n)$. The paper \cite{sapir_guba} gives a summary of examples and conjectures
concerning conjugacy growth.  

\medskip

In this paper we study the conjugacy growth of a wide class of branch groups, among them the Grigorchuk group. The following
theorem states that for many classes of branch groups this conjugacy growth is bounded from below by an intermediate function.

\begin{theorem_nn}[=Theorem \ref{thm_conj_branch}]
Let $G$ be a finitely generated regular branch group acting on a $d$-regular rooted tree. Then the conjugacy
growth
function $f(n)$ of $G$ satisfies
\[e^{n^\sigma} \precsim f(n),\] where $0<\sigma<1$,
which can be made explicit depending on the group.
\end{theorem_nn}

It would be desirable to obtain upper bounds for the conjugacy growth as well. In particular for the Grigorchuk group, whose
word growth is still only known to be bounded from below by $e^{n^{0.521}}$ as shown by J. Brieussel in his thesis
\cite{brieussel_thesis} and from above by $e^{n^{0.767}}$ as shown by L. Bartholdi in \cite{bartholdi_upper}. However, since there
exist branch groups of exponential word growth, one would need to restrict the groups under investigation to obtain interesting
results.

\medskip

In the second part of this paper we show that certain branch groups have the property that every element can be written as a
product of uniformly boundedly many generators. Properties like these have
been studied under the name bi-invariant metrics for various groups (see for example \cite{brandenbursky_gal_kedra_marcinkowski}
and \cite{burago_ivanov_polterovich}). The same property has been studied
under the name \emph{reflection length} in Coxeter groups in \cite{duszenko} and \cite{mccammond_peterson}. It will be shown
that bounded conjugacy width implies a number of other algebraic properties. We obtain the following result about branch groups:

\begin{theorem_nn}[=Theorem \ref{thm_conj_width}]
Let $G$ be a just infinite branch group that contains a rooted element and that has finite commutator width. Then $G$ has
bounded conjugacy width.
\end{theorem_nn}

We will show in Subsection \ref{subsec_bcw} that the conditions to be just infinite and to have finite commutator width are
necessary. In particular, we will give examples of groups which have finite commutator width but which have unbounded conjugacy
width.

\medskip

We can apply our results to the study of the palindromic
width of the Grigorchuk group. A palindrome is a word that reads the same right-to-left as left-to-right. It has been studied by
various authors over the last decade, whether for a group $G$ there exists a uniform bound $N$, such that every element of $G$
can be expressed as a product of at most $N$ palindromes (see for example \cite{bardakov_nilpotent}, \cite{bardakov},
\cite{me_palindromic}, \cite{me_andreas} or \cite{rileySale}). It has been shown in \cite{me_andreas} that if a group is just
infinite, then it has finite palindromic width with respect to some finite generating sets. Here we complete this picture for the
Grigorchuk group and prove
that it has finite palindromic width with respect to all generating sets.

\medskip

\textbf{Acknowledgements.} I would like to thank Laurent Bartholdi for fruitful discussions on both topics of this paper and Anna
Erschler for some helpful suggestions on the conjugacy width problem. In particular I am very grateful to the referee for a
careful reading of this text and for pointing out some mistakes in a preliminary version of this paper.
\section{Branch Groups}

In this section we will recall some of the notation and definitions for branch groups from \cite{Bartholdi} and
\cite{Segal_finiteImages}.

\subsection{Trees}\label{subsec_trees}

A \emph{tree} is a connected graph which has no non-trivial cycles. If $T$ has a distinguished \emph{root} vertex $r$
it is called a \emph{rooted tree}. The distance of a vertex $v$ from the root is given by the length of the path from
$r$ to $v$ and called the \emph{norm} of $v$. The number \[d_v = | \{e \in E(T): e=\left(v_1, v_2\right), v = v_1
\textnormal{ or } v=v_2\}|\] is called the \emph{degree} of $v \in V(T)$. The tree is called \emph{spherically
homogeneous} if vertices of the same norm have the same degree. Let $\Omega(n)$ denote the set of vertices of distance
$n$ from the root. This set is called the \emph{$n$-th level} of $T$. A spherically homogeneous tree $T$ is determined
by a
finite or infinite sequence $\bar{l}=\left\{l_n\right\}_{n=0}$ where $l_n+1$ is the degree of
the vertices on level $n$ for $n \geq 1$. The root has degree $l_0$. Hence each level $\Omega(n)$ has $\prod_{i=0}^{n-1}
l_i$ vertices. Let us denote this number by $m_n = |\Omega(n)|$. We denote such a tree by $T_{\bar{l}}$. A tree is
called \emph{regular} if $l_i = l_{i+1}$ for all $i \in \mathbb{N}_0$. Given a spherically homogeneous tree $T$ we
denote by $T[n]$ the finite tree where all vertices have norm less or equal to $n$ and write $T_v$ for the subtree of
$T$ with root $v$.
For all vertices $v,u \in \Omega(n)$ we have that $T_u \simeq T_v$. Denote a tree isomorphic to $T_v$ for $v \in
\Omega(n)$ by $T_n$. This will be the tree with defining sequence $\left(l_n, l_{n+1}, \dots \right)$. To each sequence
$\bar{l}$ we associate a sequence $\left\{X_n\right\}_{n \in \mathbb{N}_0}$ of alphabets where $X_n=\left\{v_1^{(n)},
\dots, v_{l_n}^{(n)}\right\}$ is an $l_n$-tuple so that $|X_n|=l_n$.  A path beginning at the root of length $n$ in
$T_{\bar{l}}$ is identified with the sequence ${x_1,\dots, x_i, \dots, x_n}$ where $x_i \in X_i$ and infinite paths are
identified in a natural way with infinite sequences. Vertices will be identified with finite strings in the alphabets
$X_i$. Vertices on level $n$ can be written as elements of $Y_n  = X_0 \times \dots \times X_{n-1}$. Alphabets induce
the lexicographic order on the paths of a tree and therefore on the vertices.

\subsection{Automorphisms}

An \emph{automorphism} of a rooted tree $T$ is a bijection from $V(T)$ to $V(T)$ that preserves edge incidence and the
distinguished root vertex $r$. The set of all such bijections is denoted by $\Aut(T)$. This group acts as an
imprimitive permutation group on the set $\Omega(n)$ of vertices on level $n$ for each $n \geq 2$. Consider an element
$g
\in \Aut(T)$. Let $y$ be a letter from $Y_n$,
hence a vertex of $T[n]$ and $z$ a vertex of $T_n$. Then $g(y)$ induces a vertex permutation of $Y_n$. If we
denote
the image of $z$ under $g$ by $g_y(z)$ then \[g(yz)= g(y)
g_y(z).\]

\medskip

With any group $G \leq \Aut(T)$ we associate the subgroups \[\St_G(u)=\left\{g \in G: g(u)=u\right\},\] the
\emph{stabilizer} of a vertex $u$. Then the subgroup \[\St_G(n)=\bigcap_{u \in \Omega(n)} \St_G(u)\] is called the
\emph{$n$-th level stabilizer} and it fixes all vertices on the $n$-th level. Another important class of subgroups
associated with $G \leq \Aut(T)$ consists of the \emph{rigid vertex stabilizers} \[\rst_G(u)=\left\{g \in G: \forall v
\in V(T) \setminus V(T_u): g(v)=v\right\}.\] Informally speaking, $\rst_G(u)$ fixes everything outside the subtree $T_u$ with
root $u$. The subgroup \[\rst_G(n)= \prod_{u \in \Omega(n)} \rst_G(u)\] is called the \emph{$n$-th level rigid stabilizer}.
Obviously $\rst_G(n) \leq \St_G(n)$.

\begin{definition}
Let $G$ be a subgroup of $\Aut(T)$ where $T$ is a spherically homogeneous rooted tree. We say
that $G$ acts on $T$ as a \emph{branch
group} if it acts transitively on the vertices of each level of $T$ and $\rst_G(n)$ has finite index for all $n \in
\mathbb{N}$.
\end{definition}
The definition implies that branch groups are infinite and residually finite groups. We can specify an automorphism $g$
of $T$ that fixes all vertices of level $n$ by writing $g = \left(g_1, g_2, \dots, g_{m_n}\right)_n$ with $g_i \in
\Aut\left(T_n\right)$ where the subscript $n$ of the brackets indicates that we are on level $n$. Each automorphism can
be written as $g = \left(g_1, g_2, \dots, g_{m_n}\right)_n \cdot \alpha $
with $g_i \in \Aut\left(T_n\right)$ and $\alpha$ an element of $\Sym\left(l_{n-1}\right) \wr \dots \wr
\Sym\left(l_0\right)$. Automorphisms acting only on level $1$ by permutation are called \emph{rooted automorphisms}. We
can identify those with elements of $\Sym\left(l_0\right)$.

\medskip

\begin{definition}\label{def_branch}An algebraic
description of a branch group is given by the existence of a sequence of \emph{branching subgroups}. In particular,
we say $G$ is a \emph{branch group} if there exist two decreasing sequences of subgroups $\left(L_i\right)_{i \in \mathbb{N}_0}$
and
$\left(H_i\right)_{i \in \mathbb{N}_0}$ and a sequence of integers $\left(k_i\right)_{i \in \mathbb{N}_0}$ such that $L_0 = H_0 =
G,
k_0=1$, \[\bigcap_{i \in \mathbb{N}_0} H_i=1\] and for each $i$
\begin{enumerate}
 \item $H_i$ is a normal subgroup of $G$ of finite index,
 \item $H_i$ is a direct product of $k_i$ copies of the subgroup $L_i$, in other words there are subgroups $L_i^{(1)}, \dots,
L_i^{(k_i)}$ of $G$ such that \[H_i = L_i^{(1)} \times \cdots \times L_i^{(k_i)}\] and each of the factors is isomorphic to $L_i$,
 \item $k_i$ properly divides $k_{i+1}$, i.e. $m_{i+1} = k_{i+1}/k_i \geq 2$, and the product decomposition of $H_{i+1}$ refines
the product decomposition of $H_i$ in the sense that each factor $L_i^{(j)}$ of $H_i$ contains $m_{i+1}$ of the factors of
$H_{i+1}$, namely the factors $L_{i+1}^{(l)}$ for $l=(j-1)m_{i+1}+1,\dots ,jm_{i+1}$,
 \item conjugations by the elements in $G$ transitively permute the factors in the product decomposition.
\end{enumerate}
\end{definition}

These two definitions are in general not equivalent as stated in \cite{Bartholdi_bg}. However, if a group is a branch group by
the geometric definition, then one can easily recover the branching structure that is required in the algebraic definition. For
more details, please see \cite{Bartholdi_bg}.
\medskip

In many cases the structure of branch groups becomes more accessible if we impose another condition. 

\begin{definition}\label{def_branch_reg}
A branch group $G$ acting on a rooted regular tree is called \emph{regular branch} over its normal subgroup $H$ if $H$ has finite
index
in $G$, $(H,\dots, H)_1 \leq H$ and if moreover the last inclusion is of finite index.
\end{definition}

In particular, this last definition allows us to study branch groups via so-called self-similarity arguments.

\section{The Grigorchuk Group}

The Grigorchuk group, which was introduced in \cite{grigor_1} by R. Grigorchuk, is defined via its action on a rooted binary tree. It is
generated by four automorphisms. The first one, $a$,
swaps the two top subtrees. The other three are defined recursively as
\begin{equation}\label{eq_sub}b=(a,c)_1, \quad c=(a,d)_1, \quad d=(1,b)_1.\end{equation} It is helpful to picture them via
their actions on the binary tree (see Figure \ref{fig_grig_gen}). We define the Grigorchuk group $\Gamma$ as $\Gamma =
\left<a,b,c,d\right>$.

\begin{figure}[H]
\begin{center}
\labellist
\small
\pinlabel \textcolor{black}{\large{$a$}} at 10 140
\pinlabel \textcolor{black}{\large{$b$}} at 100 140
\pinlabel \textcolor{black}{\large{$c$}} at 200 140
\pinlabel \textcolor{black}{\large{$d$}} at 300 140
\pinlabel \textcolor{red}{{$a$}} at 105 110
\pinlabel \textcolor{red}{{$a$}} at 120 85
\pinlabel \textcolor{blue}{{$1$}} at 130 60
\pinlabel \textcolor{red}{{$a$}} at 142 30
\pinlabel \textcolor{red}{{$a$}} at 155 2

\pinlabel  \textcolor{red}{$\longleftarrow \longrightarrow$} at 40 100

\pinlabel \textcolor{red}{{$a$}} at 205 110
\pinlabel \textcolor{blue}{{$1$}} at 220 85
\pinlabel \textcolor{red}{{$a$}} at 230 60
\pinlabel \textcolor{red}{{$a$}} at 242 30
\pinlabel \textcolor{blue}{{$1$}} at 255 2

\pinlabel \textcolor{blue}{{$1$}} at 300 110
\pinlabel \textcolor{red}{{$a$}} at 315 85
\pinlabel \textcolor{red}{{$a$}} at 325 60
\pinlabel \textcolor{blue}{{$1$}} at 338 30
\pinlabel \textcolor{red}{{$a$}} at 350 2
\pinlabel {$\vdots$} at 375 0
\pinlabel {$\vdots$} at 280 0
\pinlabel {$\vdots$} at 180 0
\endlabellist
\includegraphics[scale=0.8]{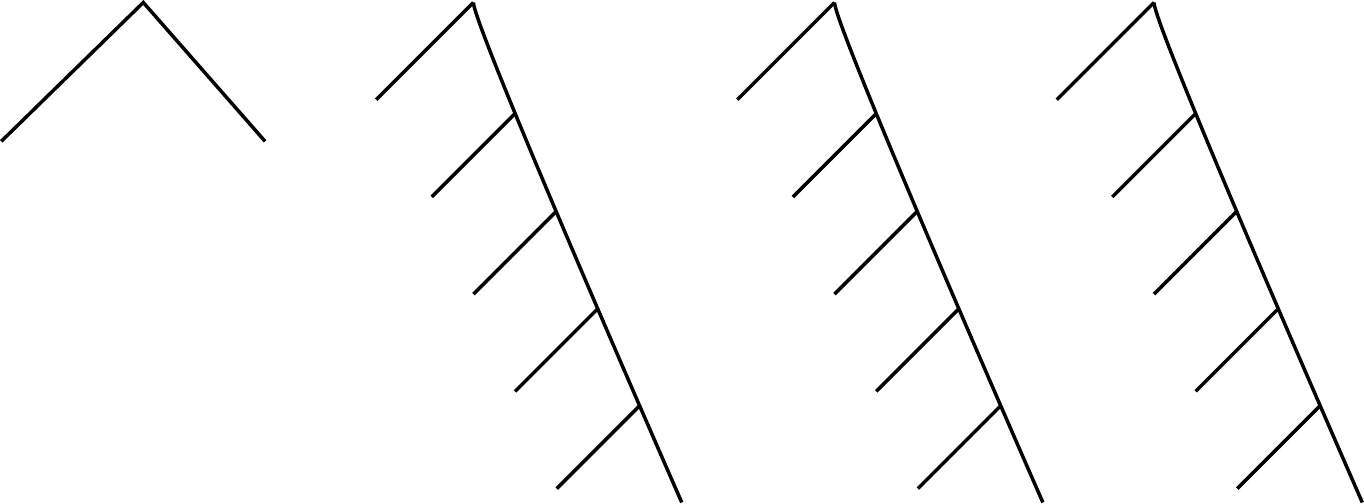}
\end{center}
\caption{Actions of the generators of the Grigorchuk group depicted on the binary tree.}
\label{fig_grig_gen}
\end{figure}

The Grigorchuk group has become well known due to its property of having intermediate word growth. It was further also the first
example of an amenable group that is not elementary amenable. The following proposition lists some of the many interesting properties of
this group.

\begin{prop}\label{prop_grig_prop}
The Grigorchuk group has the following properties:
 \begin{enumerate}
  \item It is a just infinite branch group.
  \item It is generated by three elements.
  \item \label{part_rel}It is infinitely presented, but in particular we have the following relations:
  \[a^2=b^2=c^2=d^2=1, \quad [b,c]=[b,d]=[c,d]=1.\]
  \item It has intermediate word growth.
 \end{enumerate}
\end{prop}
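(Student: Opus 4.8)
The plan is to prove the four assertions in increasing order of difficulty, deriving the elementary statements (the relations in item~\ref{part_rel} and the generation by three elements) directly from the recursive definitions~\eqref{eq_sub} together with the faithfulness of the action, and importing the two deep facts (just infiniteness and intermediate growth) from the structure theory recalled above and from Grigorchuk's original work. First I would settle the relations. That $a^2=1$ is immediate, since $a$ is the rooted involution swapping the two top subtrees. For the other generators I use that elements of $\St_\Gamma(1)$ multiply coordinatewise on their two sections, so from~\eqref{eq_sub}
\[b^2=(a^2,c^2)=(1,c^2),\quad c^2=(1,d^2),\quad d^2=(1,b^2),\]
whence $b^2=(1,(1,(1,b^2)))\in\St_\Gamma(3k)$ for every $k$; as the action is faithful, $\bigcap_n\St_\Gamma(n)=1$ forces $b^2=1$, and likewise $c^2=d^2=1$. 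The commutation relations follow by the same mechanism: a short computation gives $bc=(1,cd)$, $cd=(a,db)$ and $db=(a,bc)$, so that
\[(bc)^2=(1,(cd)^2)=(1,(1,(db)^2))=(1,(1,(1,(bc)^2))),\]
and again $(bc)^2\in\bigcap_n\St_\Gamma(n)=1$. Since $b,c,d$ are involutions this gives $[b,c]=1$, and the same chain shows $(cd)^2=(db)^2=1$, hence $[c,d]=[b,d]=1$ as well.

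With the relations in hand the second assertion is quick. Knowing that $b,c,d$ are commuting involutions, the product $bcd$ satisfies $bcd=(1,cd)(1,b)=(1,cdb)=(1,bcd)$, so $bcd\in\bigcap_n\St_\Gamma(n)=1$; therefore $d=bc$ and $\Gamma=\langle a,b,c,d\rangle=\langle a,b,c\rangle$ is generated by three elements. That three is optimal can be read off from the abelianisation $\Gamma/\Gamma'\cong(\mathbb{Z}/2)^3$ (a classical computation), since an elementary abelian $2$-group of rank $3$ cannot be generated by fewer than three elements.

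For the first assertion I would first exhibit the branch structure. The group acts level-transitively because $a$ is transitive on level~$1$ and the first-level sections of $\Gamma$ again generate $\Gamma$, so transitivity propagates by induction on the level. I would then verify that $\Gamma$ is regular branch over $K:=\langle (ab)^2\rangle^{\Gamma}$ in the sense of Definition~\ref{def_branch_reg}: one checks that $K$ has finite index in $\Gamma$ and that $(K,K)_1\le K$ with finite index, which is the required self-similar inclusion and simultaneously supplies the branching subgroups of Definition~\ref{def_branch}. Just infiniteness then follows from the standard branch-group mechanism: any nontrivial normal subgroup $N\trianglelefteq\Gamma$ contains the commutator $\rst_\Gamma(n)'$ of some rigid level stabiliser, and for $\Gamma$ these commutators have finite index, because $K/K'$ is finite and $\rst_\Gamma(n)$ contains a finite-index copy of $K\times\cdots\times K$. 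Hence $N$ has finite index, which is exactly just infiniteness. This is due to Grigorchuk, and I would cite \cite{grigor_1} and the treatment in \cite{Bartholdi} rather than reproduce the index computations.

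The last two items are the genuinely hard ones. For the intermediate growth the lower bound is comparatively soft: $\Gamma$ is an infinite, finitely generated torsion group (every element has order a power of $2$, proved by induction on word length using that passing to sections does not increase, and typically shortens, the length of an element of $\St_\Gamma(1)$), so it is not virtually nilpotent and therefore, by Gromov's polynomial growth theorem, its growth is super-polynomial. The subexponential upper bound is the step I expect to be the main obstacle and the one I would treat most carefully. The key is the length-contraction estimate: there is a constant $\eta<1$ such that an element of $\St_\Gamma(1)$ of length $n$ has two sections of total length at most $\eta n+O(1)$; iterating this across levels yields a recursion of the form $\gamma(n)\precsim \gamma(\eta n)^{2}\cdot\mathrm{poly}(n)$ for the growth function $\gamma$, from which $\gamma(n)\preceq e^{n^{\alpha}}$ with $\alpha<1$ follows, and I would then cite Grigorchuk's original estimates. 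Finally, infinite presentability (item~\ref{part_rel}) I would obtain from Lysenok's $L$-presentation: one writes the explicit infinite presentation whose relators are generated by iterating a substitution on finitely many words, and shows that no finite subset of these relators suffices.
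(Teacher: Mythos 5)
You should first note that the paper itself offers no proof of this proposition: all four items are classical facts about the Grigorchuk group, stated as background and implicitly attributed to Grigorchuk's original work \cite{grigor_1} and the surveys recalled in Section 2; the only part the paper actually uses downstream is the list of relations in item (3), which feeds the normal form \eqref{eq_grig_el}. Your reconstruction therefore does strictly more than the paper, and most of it is sound and standard. The coordinatewise computations $b^2=(1,c^2)$, $c^2=(1,d^2)$, $d^2=(1,b^2)$ and $(bc)^2=(1,(cd)^2)$, $(cd)^2=(1,(db)^2)$, $(db)^2=(1,(bc)^2)$, combined with $\bigcap_n \St_\Gamma(n)=1$, correctly give the relations; the identity $bcd=(1,bcd)=1$ gives $d=bc$, and the abelianisation $(\mathbb{Z}/2)^3$ shows three generators are necessary. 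Level transitivity via the surjectivity of the section maps, the regular branch structure over $K=\langle(ab)^2\rangle^{\Gamma}$, the just-infiniteness criterion through $\rst_\Gamma(n)'$, the torsion argument (using that each \emph{individual} section of a length-$n$ element of $\St_\Gamma(1)$ has length at most about $n/2$), and the appeal to Gromov for the superpolynomial lower bound are all the standard route, and your choice to cite \cite{grigor_1} and Lysenok's $L$-presentation for the deep steps is exactly what the paper does implicitly.

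One concrete inaccuracy: for the subexponential upper bound you assert that an element of $\St_\Gamma(1)$ of length $n$ has two sections of \emph{total} length at most $\eta n+O(1)$ with $\eta<1$. At the first level this is false: for instance $(ab)^2=abab=(ca,ac)$ has $|g_0|+|g_1|=4=|g|$, and in general one only gets $|g_0|+|g_1|\le |g|+1$. The strict contraction (with $\eta=3/4$) appears only after passing to the eight sections at level three, which is precisely why Grigorchuk's original argument is run at level three rather than level one; the recursion as you have literally written it would not close. Since you defer the quantitative estimates to the original sources anyway, this does not sink the outline, but the contraction statement needs to be corrected before the growth recursion makes sense.
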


The relations in \ref{prop_grig_prop}(\ref{part_rel}) help to understand the structure of words in the Grigorchuk group. In
particular, we can deduce some normal
form:

\begin{lemma}
Any element $g \in \Gamma$ can be written as \begin{equation}\label{eq_grig_el}a^{\epsilon}*a*a*...*a*^{\delta},\end{equation}
where
$*$ stands for either $b,c$ or $d$ and $\epsilon, \delta \in \left\{0,1\right\}$.
\end{lemma}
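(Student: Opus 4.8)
The plan is to argue by a length reduction on words in the generators, using the fact that all generators are involutions together with the multiplicative structure of the subgroup $\langle b,c,d\rangle$. The key algebraic input I would isolate first is that $\{1,b,c,d\}$ is a Klein four-group: the relations in Proposition~\ref{prop_grig_prop}(\ref{part_rel}) already show that $b,c,d$ are commuting involutions, and together with the relation $bc=d$ (equivalently $bcd=1$), which is immediate from the recursive definitions \eqref{eq_sub} by descending the tree, one obtains that the product of any two distinct elements of $\{b,c,d\}$ is the third. In particular every word in the letters $b,c,d$ alone collapses to a single element of $\{1,b,c,d\}$.

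With this in hand, I would call a word in $\{a,b,c,d\}$ \emph{reduced} if the letter $a$ and the letters from $\{b,c,d\}$ strictly alternate. Given an arbitrary $g\in\Gamma$ represented by some word $w$, I would show that $w$ can be shortened whenever it is not reduced. There are only two ways $w$ can fail to be reduced: either two copies of $a$ are adjacent, in which case $a^2=1$ deletes them and shortens $w$ by two letters; or two letters $x,y\in\{b,c,d\}$ are adjacent, in which case I replace $xy$ by the single letter $xy\in\{1,b,c,d\}$ supplied by the Klein four-group structure, deleting two letters if $x=y$ and one letter if $x\neq y$. In every case the number of letters strictly decreases while the represented element is unchanged, so after finitely many steps the process terminates at a reduced word representing $g$.

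It then remains to observe that a reduced word is exactly a word of the claimed shape. By strict alternation such a word is determined by whether it begins with $a$ and whether it ends with a letter from $\{b,c,d\}$; writing the core as $*a*a\cdots*a$ (which begins with a $*$ and ends with $a$), the optional leading $a$ is recorded by $\epsilon\in\{0,1\}$ and the optional trailing $*$ by $\delta\in\{0,1\}$, and the four resulting cases exhaust all strictly alternating words, with the short words (the identity and the single letters $a$ and $*$) arising as the degenerate instances.

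I expect the only genuine subtlety to be the Klein four-group fact $bc=d$: without it a block of $\{b,c,d\}$-letters would only reduce to a product $b^ic^jd^k$ of up to three letters rather than to a single letter, and the single-letter syllables $*$ between consecutive occurrences of $a$ in the normal form would be lost. Since this relation is not among those listed in Proposition~\ref{prop_grig_prop}(\ref{part_rel}), I would either invoke it as a standard property of $\Gamma$ or verify it directly from \eqref{eq_sub}; everything else is a routine terminating rewriting argument.
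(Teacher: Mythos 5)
Your proof is correct and is in substance the same (and only) argument: the paper's own proof is the single line ``this follows immediately from Proposition~\ref{prop_grig_prop}(\ref{part_rel})'', and your rewriting procedure is exactly the expansion of that remark. The one place where you go beyond the paper is also the one place where you are right to be cautious: the relations listed in Proposition~\ref{prop_grig_prop}(\ref{part_rel}) alone only let a block of $\{b,c,d\}$-letters collapse to some $b^ic^jd^k$, and to get a \emph{single} letter $*$ between consecutive occurrences of $a$ one genuinely needs the Klein four-group relation $bc=cb=d$ (equivalently $bcd=1$), which is a standard fact about $\Gamma$ verifiable from the recursion \eqref{eq_sub} but is not among the relations the paper cites. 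So your write-up is not a different route but a more honest one; the paper's ``immediately'' silently uses $bcd=1$.
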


\begin{proof}
This follows immediately from Proposition \ref{prop_grig_prop}(\ref{part_rel}).
\end{proof}

\section{Conjugacy Growth}\label{sec_conj_growth}

In this section we study the conjugacy growth function of branch groups acting on a regular rooted tree and then treat the special
case of the Grigorchuk group. We will consider this group separately in Subsection \ref{subsec_bcw_grig} to get a better
estimate than the one which would follow from
the general result about branch groups. Our approach for branch groups in general uses its finite index branching subgroup. It
emerges from
the work of M. Hull and D. Osin in \cite{hull_osin} that there exists a group with exponential conjugacy growth, but it has an
index
$2$ subgroup which has only $2$ conjugacy classes. We emphasize that in our approach we consider the lengths of the words in the
branching
subgroup in the word metric coming from the group itself and conjugation will also be considered in the whole group.

\medskip

Let $G$ be any finitely generated group. We will for the rest of this paper denote the conjugate of an element $g \in G$ by
another element $x \in G$ as
$g^x=x^{-1}gx$ and commutators by $[x,y]=x^{-1}y^{-1}xy, x,y \in G$. Further, we denote by $l_X(g)$ the word length of an element
$g$ in the
generators of the group. This word length
depends on the chosen generating set $X$. If it is clear which generating set we will refer to, we will omit the subscript $X$
and simply denote the length of $g$ by $l(g)$.

\medskip

A function $f: \mathbb{R}_+ \rightarrow \mathbb{R}_+$ is \emph{dominated} by $g: \mathbb{R}_+ \rightarrow \mathbb{R}_+$, written
$f \precsim g$, if there is a constant $C \in \mathbb{R}_+$ such that $f(n) \leq g(Cn)$ for all $n \in \mathbb{R}_+$. Two
functions are \emph{equivalent}, denoted by $f \sim g$, if $f \precsim g$ and $g \precsim f$.

\begin{definition}
Let $X$ be a generating set for a group $G$ and denote for $c \in G$ by $[c]$ the conjugacy class of $c$. The \emph{conjugacy
growth
function} is defined as \[f_X(n)=|\left\{[c]\h
| \h l_X(c)\leq n\right\}|.\] In words, the conjugacy growth function counts the number of distinct conjugacy classes within a
ball
of radius $n$. 
\end{definition}

This definition also depends on the chosen generating set. However, one can easily see that a change of the generating set does
not change the equivalence class of the conjugacy growth function. It is clear that this function is bounded from above by the
word growth $\gamma_G(n)$ of a group $G$. 

\subsection{Regular branch groups}

We show that if a branch group $G$ acts on a regular rooted tree, then its conjugacy growth function is bounded from below by a
function equivalent to $e^{n^\sigma}$ for some $0<\sigma<1$, which depends on the group.

\medskip

We first establish that the number of conjugacy classes in a branch group is in fact unbounded.

\begin{lemma}\label{lemma_conj_unbounded}
Let $G$ be a branch group, then $G$ has infinitely many distinct conjugacy classes.
\end{lemma}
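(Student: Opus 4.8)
The plan is to exploit the direct-product structure of the rigid level stabilizers $\rst_G(n)=\prod_{u\in\Omega(n)}\rst_G(u)$ together with the fact that $G$ permutes these direct factors by conjugation, and to extract from this a conjugacy invariant that can take arbitrarily many values. The point worth emphasising from the outset is that mere infiniteness of $G$ is not enough — as the paragraph preceding this lemma recalls, there are infinite (even residually finite) groups with only finitely many conjugacy classes — so the argument must genuinely use the branch structure.

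First I would record two structural facts. Since $\rst_G(n)$ has finite index in the infinite group $G$, it is infinite, hence nontrivial. As $G$ acts transitively on $\Omega(n)$, conjugation by $G$ permutes the factors $\rst_G(u)$, $u\in\Omega(n)$, transitively; in particular these factors are pairwise isomorphic, so each $\rst_G(u)$ is itself nontrivial (indeed infinite). Equivalently, in the algebraic language of Definition \ref{def_branch}, each $H_i=L_i^{(1)}\times\cdots\times L_i^{(k_i)}$ is normal of finite index, hence infinite, and its factors are nontrivial and transitively permuted by conjugation.

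The heart of the argument is the invariant. Write any $x\in\rst_G(n)$ uniquely as $x=\prod_{u\in\Omega(n)}x_u$ with $x_u\in\rst_G(u)$, and set $\operatorname{supp}(x)=\{u\in\Omega(n):x_u\neq 1\}$. For $g\in G$ one has $g^{-1}\rst_G(u)\,g=\rst_G(g^{-1}(u))$, so conjugating gives $x^g=\prod_{u}(g^{-1}x_u g)$, where the factor $g^{-1}x_u g$ lies in $\rst_G(g^{-1}(u))$. Thus $x^g\in\rst_G(n)$ with $\operatorname{supp}(x^g)=g^{-1}\bigl(\operatorname{supp}(x)\bigr)$, and in particular $|\operatorname{supp}(\cdot)|$ is constant on $G$-conjugacy classes. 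Consequently any two elements of $\rst_G(n)$ whose supports have different cardinalities lie in distinct conjugacy classes of $G$. It then remains only to realise many support sizes: choosing a nontrivial $r_u\in\rst_G(u)$ for each $u$ in a $k$-element subset $S\subseteq\Omega(n)$ and forming $\prod_{u\in S}r_u$ yields an element of support size exactly $k$ for every $1\le k\le|\Omega(n)|$, giving at least $|\Omega(n)|$ distinct conjugacy classes. Since the number of factors is unbounded ($|\Omega(n)|\to\infty$, equivalently $k_i\to\infty$ by condition (3) of Definition \ref{def_branch}, as $k_{i+1}/k_i\ge 2$), letting $n\to\infty$ produces infinitely many distinct conjugacy classes.

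I expect the only real subtlety to be the verification that $|\operatorname{supp}(\cdot)|$ is genuinely conjugacy-invariant, since a priori conjugation might move an element out of $\rst_G(n)$ or redistribute its coordinates. The branch structure resolves exactly this: conjugation by $g$ merely permutes the direct factors $\rst_G(u)$ according to the action of $g$ on $\Omega(n)$, rather than entangling them, and this is precisely the feature that fails for the infinite groups with few conjugacy classes mentioned above. Everything else is bookkeeping with the internal direct product decomposition of $\rst_G(n)$.
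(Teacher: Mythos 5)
Your argument is correct, but it takes a genuinely different route from the paper's. The paper's proof uses only residual finiteness: the level stabilizers $\St_G(n)$ form a descending chain of normal, finite-index subgroups with trivial intersection, so the chain decreases strictly infinitely often; choosing $g_i\in\St_G(n_i)\setminus\St_G(n_{i+1})$, normality of the $\St_G(n)$ makes the ``depth'' of an element a conjugacy invariant that takes infinitely many values along this sequence. You instead use the internal direct-product decomposition of $\rst_G(n)$ and the fact that conjugation permutes its factors according to the action on $\Omega(n)$, taking $|\operatorname{supp}(\cdot)|$ as the invariant. Your route genuinely uses the branch structure and buys a quantitative statement (at least $|\Omega(n)|$ classes for each $n$, in the spirit of the self-similar counting in the proof of Theorem \ref{thm_conj_branch}); the paper's route is shorter and, notably, applies verbatim to any infinite residually finite group.

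That last observation pinpoints the one error in your writeup, which sits in a motivational aside rather than in the proof proper: there is no infinite residually finite group with finitely many conjugacy classes. A finite group with $k$ conjugacy classes has order bounded in terms of $k$ (Landau), and the number of conjugacy classes of a group is at least that of any quotient, so an infinite residually finite group, having arbitrarily large finite quotients, automatically has infinitely many conjugacy classes --- which is essentially the paper's proof. The groups with two conjugacy classes alluded to before the lemma are infinite but very far from residually finite. So your claim that ``mere infiniteness (even residually finite) is not enough'' should be deleted; the branch structure is not actually needed for this lemma, although your use of it is valid and yields more.
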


\begin{proof}
Branch groups are residually finite groups, with a sequence of filtration subgroups given by $\left\{st_G(n)\right\}$. Hence there
exists a
sequence $\left\{n_i\right\}_{i \in \mathbb{N}}, n_i \in \mathbb{N}$, such that we can find elements $g_{i} \in G$ such that
$g_{i} \in st_G(n_i) \setminus st_G(n_{i+1})$. Any two
elements of the sequence $\left\{g_i\right\}_{i \in \mathbb{N}}$ are not conjugate.
\end{proof}

\begin{theorem}\label{thm_conj_branch}
Let $G$ be a finitely generated regular branch group acting on a $d$-regular rooted tree. Then the conjugacy
growth
function $f(n)$ of $G$ satisfies
\[e^{n^\sigma} \precsim f(n),\] where $0<\sigma<1$,
which can be made explicit depending on the group.
\end{theorem}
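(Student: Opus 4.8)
The plan is to use the self-similar structure supplied by the branching subgroup $H$ of Definition~\ref{def_branch_reg} to set up a recursion for the number of conjugacy classes of short elements. Write $\psi\colon \St_G(1)\to G^{d}$ for the section map sending an element fixing the first level to its $d$ sections, and let $\iota\colon H^{d}\to H$ be the embedding that places $g_{1},\dots,g_{d}\in H$ into the $d$ subtrees below the root; by Definition~\ref{def_branch_reg} the image of $\iota$ lies in $H$ and has finite index. Throughout I would count
\[
 c(m)=\bigl|\{\,[g] : g\in H,\ l_{G}(g)\le m\,\}\bigr|,
\]
the number of distinct $G$-conjugacy classes possessing a representative in $H$ of word length at most $m$. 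Since $f(m)\ge c(m)$, a lower bound for $c$ suffices.

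The first ingredient is a length-expansion estimate: there are constants $D>1$ and $E$, depending only on $G$ and the generating set, with
\[
 l_{G}\bigl(\iota(g_{1},\dots,g_{d})\bigr)\ \le\ D\sum_{i=1}^{d}l_{G}(g_{i})+E .
\]
This holds because $H$ is finitely generated (being of finite index), so its word metric is bi-Lipschitz to the restriction of $l_{G}$, and each generator of $H$ placed into a fixed subtree is a fixed element of $G$ of bounded length; concatenating these placements yields the bound. The second ingredient is a conjugacy invariant: if $\iota(g_{1},\dots,g_{d})$ and $\iota(h_{1},\dots,h_{d})$ are conjugate in $G$, then the multisets $\{[g_{1}],\dots,[g_{d}]\}$ and $\{[h_{1}],\dots,[h_{d}]\}$ of $G$-conjugacy classes coincide. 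Indeed, a conjugating element permutes the first-level subtrees and acts on each section by one of its own sections, which by self-similarity again lies in $G$; hence the sections of a conjugate are, up to a permutation, $G$-conjugates of the original sections.

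Combining the two ingredients, any multiset of $d$ of the $c(m)$ available classes (repetitions allowed) produces via $\iota$ an element of $H$ of length at most $Ddm+E$, and by the invariant distinct multisets give non-conjugate elements. This yields the recursion
\[
 c\bigl(Ddm+E\bigr)\ \ge\ \binom{c(m)+d-1}{d}\ \ge\ \Bigl(\tfrac{c(m)}{d}\Bigr)^{d}.
\]
For a base case I would observe that $c(m)\to\infty$: the elements constructed in Lemma~\ref{lemma_conj_unbounded} may be chosen inside $H$ and in pairwise distinct layers $\St_{G}(n)\setminus\St_{G}(n+1)$, which are invariant under $G$-conjugacy because each $\St_{G}(n)$ is normal in $G$, so they are pairwise non-conjugate. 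Choosing $m_{0}$ with $c(m_{0})>d^{\,d/(d-1)}$ and iterating the recursion along $n=(Dd)^{k}m_{0}$, a short induction shows $\log c(n)$ grows like $d^{k}$; unwinding $k\approx\log_{Dd}(n/m_{0})$ gives $c(n)\ge\exp(c'n^{\sigma})$ with $\sigma=\dfrac{\ln d}{\ln(Dd)}\in(0,1)$, and monotonicity of $f$ then yields $e^{n^{\sigma}}\precsim f(n)$.

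I expect the main difficulty to be the two structural lemmas rather than the counting. The length-expansion constant $D$ is what determines the exponent, and one must check that $D>1$ strictly (so that $\sigma<1$): this reflects that assembling sections genuinely lengthens elements, and it is exactly the quantity that ``can be made explicit depending on the group.'' The conjugacy-invariant step is the other delicate point, since it relies on the sections of elements of $G$ remaining in $G$; this is where self-similarity of the regular branch action is essential, and for groups that are branch but not self-similar the argument would need to be adapted.
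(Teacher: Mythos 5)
Your argument is correct and follows essentially the same route as the paper: embed $d$ chosen class representatives into the subtrees via $(H,\dots,H)_1\le H$, use the multiset of $G$-conjugacy classes of the sections as a conjugacy invariant, and iterate the resulting recursion with a base case supplied by Lemma~\ref{lemma_conj_unbounded}. Your bookkeeping is slightly cleaner (counting multisets and restricting throughout to classes meeting $H$, which replaces the paper's $1/(TK)$ factor, and making the bi-Lipschitz constant explicit), but the method and the exponent $\sigma=\log d/\log(dD)$ match the paper's $\sigma=\log d/\log(dM)$.
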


\begin{proof}
The group $G$ is by hypothesis branch, so we have the regular branching structure from Definition \ref{def_branch_reg}. The
branching subgroup $H$ has finite index in $G$ hence $H$ and $(H, \dots, H)_1$ are finitely generated as well. Say $(H, 1, \dots,
1)_1$ is
finitely generated by elements $\left\{y_1,
\dots, y_s\right\}$ and so $\left(H, \dots, H\right)_1$ is generated by $\left\{y_1, \dots, y_{l}\right\}$ with $l=sd$. If $X$ is
the finite generating set of $G$, then we denote by $M$ the maximum over all word lengths of the
$y_i, $\[M=\max_{j=1, \dots,
l}\left\{l_X(y_j)\right\}.\] In words, it takes at most a word of length $M$ in the letters $X$ to write each generator of each
copy of $H$ on each of
the $d$ subtrees of
level $1$. Figure \ref{fig_recursion} depicts this idea of finding multiple copies of $H$ on the first level which are contained
in $H$.
\begin{figure}[H]
\begin{center}

\labellist
\pinlabel \textcolor{blue}{\large{$H$}} at 30 20
\pinlabel \textcolor{blue}{\large{$H$}} at 110 20
\pinlabel \textcolor{blue}{\large{$H$}} at 280 20
\endlabellist

\includegraphics[scale=0.8]{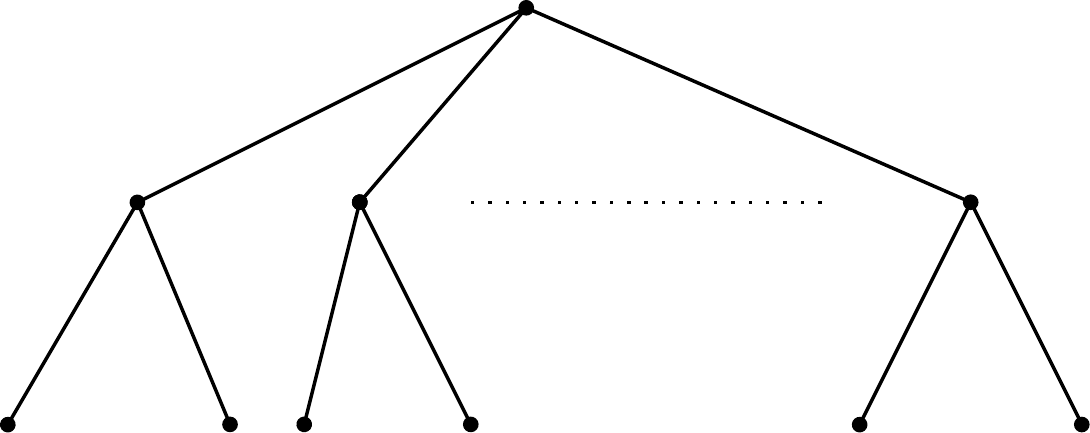}
\caption{Self-similarity of $H$.}
\label{fig_recursion}
\end{center}
\end{figure}
Now we count the number of distinct conjugacy classes in a ball of radius $d\cdot M \cdot n$. If an element $g=\left(g_1,\dots,
g_d\right)_1$ is
in
the first level stabilizer, it means that it does not permute any of the first level vertices. Then a conjugate of $g$ by
$h=\left(h_1,
\dots, h_d\right)_1 \cdot \tau$ has the following form:
\[g^h = \left(g_{\tau(1)}^{h_{\tau(1)}}, \dots, g_{\tau(d)}^{h_{\tau(d)}}\right)_1,\] where $\tau \in \Sym(d)$ is the
permutation of the first level vertices, coming from the element $h$. This allows us to apply a self-similarity argument. We undercount the number of conjugacy
classes
by assuming that we
write a word of length $n$ on each of the subtrees. Further, we need to consider that all permutations of the subtrees of the
first level are possible by conjugation. However,
there are at most as many different permutations of the first level vertices in $G$ as there are elements in $G/st_G(1)$. Denote
the index of $st_G(1)$ in $G$ by $K$.
So we get a recursive formula
\[f\left(d\cdot M \cdot n\right) \geq \frac1{T\cdot K}f(n)^d.\]
Here the product $d\cdot M \cdot n$ comes from writing $d$ copies of length $n$, but for each word of length one in each of these
copies
we need at most $M$ letters. Hence $d \cdot M \cdot n$ is an upper bound for writing a word of length $n$ on each of the $d$
subtrees. The
factor $1/(TK)$ is explained as follows: We only look at the words that stabilize level $1$. The
subgroup $st_G(1)$ has index $K$ which implies that there exists a constant $T$ such that $|B_G(n)\cap st_G(1)| \geq
\frac{B_G(n)}{T}$ for $n \geq K$. The factor $1/K$ comes from the fact that the $d$ subtrees can be permuted
by conjugation. However, as $st_G(1)$ has index $K$, there are at most $K$ different such permutations. The power $d$ of $f(n)$
indicates that each different combination of conjugacy classes on the subtrees gives a different element. We repeat this with
$Q=TK$ and get
\[f\left((dM)^in\right) \geq \frac{1}{Q^{1+d+d^2+\dots+d^{i-1}}} \cdot f(n)^{d^i}\] which is further
\[f\left((dM)^in\right) \geq \frac{1}{Q^{d^i}} \cdot f(n)^{d^i}=
\left(\frac{f(n)}{Q}\right)^{d^i}.\] We now start with $n=N$ such that $f(N)>Q$, as shown to be possible in Lemma
\ref{lemma_conj_unbounded}, to obtain
\[f\left((dM)^i N\right) \geq C^{d^i}, \] where $C>1$ is a constant. Rewritten with $k=(dM)^i$ we get
\[f(kN) \geq C^{k^\sigma},\] with $\sigma = \frac{\log (d)}{\log(dM)}$ and so \[f(k) \succsim e^{k^\sigma}.\]
\end{proof}

As we will see in the next section, this bound is rather general. By knowing more about the structure of a specific branch
group, this lower bound can in some cases be improved significantly. A similar approach can also be applied to branch groups which
are not regular. However, in such a case, the constants $M, K$ and $d$ in the proof above would be different on every level of
the tree. In
fact, they might not even follow the same recursion. Hence it appears rather difficult to find bounds for the conjugacy growth of
such a branch group with the approach that we have taken above. Examples of branch groups which are not regular  first emerged
in \cite{Segal_subgroupGrowth} and were later studied in \cite{me_branchgroups}.

\medskip

Of course an upper bound for the conjugacy growth would be very interesting. This depends apriori heavily on the group, as
some branch groups have intermediate word growth, others have exponential word growth. We however suspect, that for most branch
groups an upper bound for the conjugacy growth will not be very different than the one for the word growth.

\subsection{Grigorchuk group}\label{subsec_bcw_grig}

We could now apply Theorem \ref{thm_conj_branch} to the Grigorchuk group. This group is acting on a binary tree, so we
immediately get $K=d=2$. For reasons that we do not want to elaborate here, we have $M=24$ with the standard generating set, hence
yielding a lower
bound equivalent to $e^{n^{0.179}}$ for the conjugacy growth function. However, by studying the structure of the Grigorchuk group
more carefully, we obtain a better estimate. First we cite an auxiliary lemma that we
will need in the proof. We fix for the rest of this section the group $\Gamma$ as the Grigorchuk group.

\begin{lemma}\label{lemma_subwords}
We have a recursive formula:
\[\left(B_{\Gamma}\left(n\right), B_{\Gamma}\left(m\right)\right)_1 \subseteq B_{\Gamma}(2(n+m)).\] In words, with a word of
length $2(n+m)$ we can
write any combination of words of lengths $n$ respectively $m$ on the two top subtrees. 
\end{lemma}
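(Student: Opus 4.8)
The plan is to realise the element $(g,h)_1$ explicitly as a word in $a,b,c,d$ by reading off the normal forms of $g$ and $h$ in parallel, and then to bound the length of the resulting word by a syllable count. First I would fix normal forms $g=a^{\alpha_0}y_1\,a\,y_2\,a\cdots$ and $h=a^{\beta_0}z_1\,a\,z_2\cdots$ with $y_j,z_j\in\{b,c,d\}$, as supplied by the normal-form lemma, so that $l(g)$ and $l(h)$ equal the number of letters appearing. Since $(g,h)_1$ fixes the first level, it can be written as $w=v_0\,a\,v_1\,a\,v_2\cdots a\,v_{2t}$ with each $v_i\in\{1,b,c,d\}$ and an even number $2t$ of occurrences of $a$; then $l(w)$ is at most the number of nontrivial syllables plus $2t$.

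Next I would make the dependence of the two sections on the syllables explicit. Writing $\lambda,\rho$ for the two coordinate maps determined by $\psi(b)=(a,c)$, $\psi(c)=(a,d)$, $\psi(d)=(1,b)$ — so that $\lambda$ sends $b,c\mapsto a$ and $d\mapsto 1$, while $\rho$ is the $3$-cycle $b\mapsto c\mapsto d\mapsto b$ — and using that conjugation by $a$ swaps the two sections, a short computation (checked on $w=baba$, where $\psi(w)=(ac,ca)$) gives
\[ g=\lambda(v_0)\rho(v_1)\lambda(v_2)\rho(v_3)\cdots, \qquad h=\rho(v_0)\lambda(v_1)\rho(v_2)\lambda(v_3)\cdots. \]
Because $\lambda$ lands in $\{1,a\}$ and $\rho$ in $\{b,c,d\}$, the right-hand sides are exactly the alternating normal forms of $g$ and $h$, so each syllable $v_i$ emits at most one letter of $g$ and at most one letter of $h$ at once. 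Reading the two normal forms in parallel therefore lets me choose the $v_i$ one at a time; since a nontrivial syllable contributes at least one letter to $l(g)+l(h)$, the number of nontrivial syllables is at most $l(g)+l(h)\le n+m$. As the syllables are separated by single $a$'s, the separating $a$'s number at most $n+m$ as well, and $l(w)\le 2(n+m)$ follows.

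The hard part will be the coordination: a single even-indexed syllable must simultaneously contribute the correct $\lambda$-letter to $g$ and the correct $\rho$-letter to $h$, and both demands are tied to the one choice $v_i\in\{b,c,d\}$. I would verify that whenever the pair is realisable — which is precisely the hypothesis that $(g,h)_1\in\Gamma$, equivalently that the two normal forms interleave with compatible parities of $a$'s — the syllables can always be selected consistently, at worst one syllable per produced letter; this is exactly where the factor $2$ rather than $1$ enters. I would finish by accounting for the boundary terms, namely the leading or trailing $a$'s that are forced when a required syllable must sit in an odd position (as already visible in $(b,1)_1=ada$), which produce only lower-order corrections, and by checking the small base cases directly.
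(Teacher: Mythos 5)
Your structural setup is correct and is essentially a formalisation of what the paper does: the two coordinate maps $\lambda$ (with $b,c\mapsto a$, $d\mapsto 1$) and $\rho$ (the $3$-cycle $b\mapsto c\mapsto d\mapsto b$) are exactly the substitution rules $b=(a,c)_1$, $c=(a,d)_1$, $d=(1,b)_1$, and your interleaved formula for the two sections of $w=v_0\,a\,v_1\,a\cdots a\,v_{2t}$ checks out. The problem is that the entire content of the lemma lives in the step you label ``the hard part'' and then merely announce: \emph{``I would verify that \dots the syllables can always be selected consistently.''} That verification is the proof; without it you have only restated the lemma in the language of syllables. Concretely, the conflict is real: since $\rho$ is a bijection of $\{b,c,d\}$, prescribing the $\{b,c,d\}$-letter that a syllable emits into one section \emph{forces} the $\{1,a\}$-letter it emits into the other (e.g.\ demanding $b$ in $h$ forces $v=d$, hence the letter $1$ rather than $a$ in $g$; demanding $c$ or $d$ forces an $a$ in $g$). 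So a syllable serving one section generally dumps an unwanted letter into the other, and your counting claim that ``a nontrivial syllable contributes at least one letter to $l(g)+l(h)$'' does not justify the bound: unwanted letters do not count towards $l(g)+l(h)$, yet they must still be cancelled or absorbed by further syllables, and it is exactly this bookkeeping that must be shown to stay within the factor $2$.

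The paper's proof, informal as it is, does supply this missing ingredient. It writes the right-hand word $w_1$ exactly (at a cost of roughly two letters of $w$ per letter of $w_1$, e.g.\ $aca$ to emit a leading $a$, $ad$ to emit a leading $b$) and observes that the resulting residue on the left subtree is then a word in $\{a,d\}$ only; since $ad$ has order $4$, this residue collapses to one of a handful of short words such as $adad$, and the choices can then be perturbed ($aba$ versus $aca$, etc.) so that the left section spells the desired $w_0$ instead of junk. Some argument of this kind --- identifying the group in which the unwanted emissions live, showing it is small, and showing the choices can be corrected without leaving the length budget --- is what your proposal needs before it is a proof. (A minor further caveat: as your own example $(b,1)_1=ada$ shows, the inclusion is delicate for very small $n,m$, so any careful write-up should treat the boundary syllables and base cases explicitly rather than dismissing them as lower-order corrections.)
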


\begin{proof}
To see this, we use the standard form of a word in \eqref{eq_grig_el}. We will show that it is possible to write any word
of
length $m$ on the right subtree without influencing the left subtree too much. 
Denote the two subwords by $w_0$ and $w_1$.
Obviously $w_0$ and $w_1$ again have the form in \eqref{eq_grig_el}. To write an $a$ as the first letter of $w_1$, we choose $w$
to begin with $aca$. To write any of the other generators, we use the substitution rules from \eqref{eq_sub}. So to get $ab$ in
$w_1$ we write $acad$:
\[w=acad \longrightarrow \left(1, ab\right)_1\]
\[w=acab \longrightarrow \left(1, ac\right)_1\]
\[w=acac \longrightarrow \left(1, ad\right)_1.\]
It is important to choose $aca$ in $w$ if
we want to write $a$ in $w_1$. This will then leave us with a word $a^{\epsilon} d\dots ad^{\delta}$ on the left subtree, with
$\epsilon, \delta \in \left\{0,1\right\}$.
Now $ad$ has order $4$. So
in the worst case we are left with $adad$ on the left subtree: \[w=\dots \longrightarrow \left(adad,
w_1\right)_1.\] However, we
want to choose
the
word $w_0$ freely. So if $w_0$ does not start with the word coming from writing $w_1$, we simply slightly modify our choice of
letters for
$w$ to write $w_1$. This can be done such that it does not affect $w_1$, but leaves us with $adac$, $acac$ or $acad$:
\[w=\dots *aba*aca \longrightarrow \left(acad, w_1\right)_1\]
\[w=\dots *ba*aba \longrightarrow  \left(acac, w_1\right)_1\]
\[w=\dots *aca*aba \longrightarrow  \left(adac, w_1\right)_1\]
If however one of the first occurrences in $w_1$ is the letter $b$, then we put $ad$ into $w$ at the beginning:
\[w=ad \dots \longrightarrow  \left(b \dots, w_1\right)_1.\]
This will not
affect $w_1$. We notice that with this we are already using $2$ letters of $w$. However, we are also already gaining at least one
letter in $w_0$. So in total, we need $2 \cdot \left(l\left(w_0\right) + l\left(w_1\right)\right)$ letters to write any words
$w_0, w_1$. 
\end{proof}

We can now provide a better estimate for the bounds of the conjugacy growth function of the Grigorchuk group.

\begin{theorem}\label{thm_grig_conj}
The conjugacy growth function $f(n)$ of the Grigorchuk group $\Gamma$ satisfies
\[e^{n^{0.5}} \precsim f(n) \precsim e^{n^{0.767}}.\]
\end{theorem}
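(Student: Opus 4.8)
The plan is to obtain the two bounds by separate arguments, treating the lower bound $e^{n^{0.5}} \precsim f(n)$ via the self-similar recursion from Lemma \ref{lemma_subwords}, and deriving the upper bound $f(n) \precsim e^{n^{0.767}}$ essentially for free from the known word-growth bound of Bartholdi together with the general observation that conjugacy growth is dominated by word growth. The upper bound is the easier half: since $f(n) \leq \gamma_\Gamma(n)$ for every group, and Bartholdi's result in \cite{bartholdi_upper} gives $\gamma_\Gamma(n) \precsim e^{n^{0.767}}$, I would simply cite these two facts and conclude $f(n) \precsim e^{n^{0.767}}$ in one line.

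For the lower bound I would run the same recursive counting scheme as in the proof of Theorem \ref{thm_conj_branch}, but with the sharper self-similarity estimate provided by Lemma \ref{lemma_subwords} in place of the crude constant $M=24$. First I would restrict attention to conjugacy classes of elements lying in the first-level stabilizer $\St_\Gamma(1)$, writing such an element as $g=(g_0,g_1)_1$. As recorded in the proof of Theorem \ref{thm_conj_branch}, conjugation by $h=(h_0,h_1)_1\cdot\tau$ acts by conjugating the components coordinatewise and possibly swapping them via $\tau\in\Sym(2)$; hence a pair of conjugacy classes on the two subtrees determines a conjugacy class in $\Gamma$, up to the at most $K=2$ choices of $\tau$ and up to the index of $\St_\Gamma(1)$. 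Lemma \ref{lemma_subwords} tells me that two words of length $n$ on the subtrees cost only $2(n+n)=4n$ letters in $\Gamma$, so the relevant scaling factor is $dM = 4$ rather than the general $48$. This yields the recursion
\[
f(4n) \;\geq\; \frac{1}{Q}\, f(n)^2,
\]
with $Q=TK$ a fixed constant absorbing the index of $\St_\Gamma(1)$ and the permutation factor, exactly as in Theorem \ref{thm_conj_branch} but with $d=2$.

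Iterating this recursion $i$ times and starting from an $N$ with $f(N)>Q$ (possible by Lemma \ref{lemma_conj_unbounded}), I would obtain $f(4^i N)\geq C^{2^i}$ for some $C>1$. Setting $k=4^i$ gives $2^i = k^{1/2}$, so $f(kN)\geq C^{k^{0.5}}$ and therefore $e^{n^{0.5}}\precsim f(n)$, where the exponent $\sigma=\log d/\log(dM)=\log 2/\log 4 = 0.5$ replaces the much smaller $0.179$ coming from the general bound. The main obstacle, and the reason Lemma \ref{lemma_subwords} was proved, is the control of the ``leftover'' word that writing $w_1$ forces onto the left subtree: one must verify that any desired $w_0$ can still be written while correcting this interference, which is precisely the coordinate-by-coordinate bookkeeping with the substitution rules $b=(a,c)_1$, $c=(a,d)_1$, $d=(1,b)_1$ carried out in that lemma. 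Everything else is the same geometric-series manipulation of the recursion as in the regular-branch case, so once the improved factor $dM=4$ is justified the lower bound follows by the identical argument.
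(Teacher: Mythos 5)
Your proposal is correct and follows essentially the same route as the paper: the upper bound is quoted from Bartholdi's word-growth estimate, and the lower bound restricts to the first-level stabilizer, uses Lemma \ref{lemma_subwords} to get the scaling factor $4$, and iterates the recursion $f(4n)\geq \frac{1}{2T}f(n)^2$ exactly as in the paper's argument to reach exponent $\log 2/\log 4 = 0.5$. No substantive differences.
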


\begin{proof}
The upper bound is given by the word growth as computed by L. Bartholdi in \cite{bartholdi_upper}. This bound is not known to be
sharp. For the lower bound, we look
at the action of an element $g \in \Gamma$ on the two subtrees of level $1$. Let $g$ be of length $n$, and let $h$ be conjugate to
$g$, of length less than or equal to $n$. Let $g$ act as $g_0$ and $g_1$ on the two subtrees $T_0$ respectively $T_1$ of level
$1$ and $h$ as $h_0$ respectively $h_1$:
\[g=\left(g_0,g_1\right)_1, \quad h = \left(h_0, h_1\right)_1.\]
First assume that $g$ fixes the first level, in other words, it contains even many times the generator $a$. Of course $h$ must
then have the same property. In that case, $h$ can only be conjugate to $g$ if we have that either $g_0$ is conjugate to
$h_0$
and $g_1$ to $h_1$, or we have that $g_0$ is conjugate to $h_1$ and $g_1$ to $h_0$. 

\medskip

By Lemma \ref{lemma_subwords} we can, with a
word of length $n$, write at least any combination of words of lengths $\frac n4$ on the two top subtrees. So we can produce at
least as many different classes of conjugates of words of length $n$ as we can have different conjugates of words of length
$\frac n4$ on each subtree, divided by $2$ since the two subtrees can be interchanged by conjugation with $a$. We can express
this recursively, where we have another factor $1/T$ because we only count words which lie in the first level stabilizer:
\[f(4n) \geq \frac1{2T} f(n)^2.\]
We repeat this to get
\[f\left(4^{i}n\right) \geq \frac1{(2T)^{1+2+4+\dots+2^{i-1}}} f(n)^{2^{i}}\] and as in the proof of Theorem
\ref{thm_conj_branch} we obtain \[f\left(4^in\right) \geq \left(\frac{f(n)}{2T}\right)^{2^i}.\] Now choosing $n=N$ such that
$f(N)>2T$, which is possible by Lemma
\ref{lemma_conj_unbounded}, and a variable substitution $k=4^i$ then gives
\[f(kN) \geq C^{k^{0.5}}, \] for a constant $C>1$.
So we get
\[f(k) \succsim e^{k^{0.5}}.\]
\end{proof}

It appears as if it might be possible to further optimize Lemma \ref{lemma_subwords} with a similar, lengthly and
technical approach as in
\cite{brieussel_thesis}. Applied to Theorem \ref{thm_grig_conj} this could result in $\sigma$ slightly greater than $0.5$.

\medskip

An interesting question informally asked by M. Sapir is whether there exist groups which have oscillating word growth, but
non-oscillating conjugacy growth. In particular, one source of examples of groups with oscillating word growth is given by
examples of L. Bartholdi and A. Erschler in \cite{erschler_osc}.

\begin{question}
Do the groups of oscillating intermediate growth as defined in \cite{erschler_osc} also have oscillating conjugacy
growth?
\end{question}

We expect that the quotient $q(n)=\gamma(n)/f(n)$ of the word growth $\gamma(n)$ and the conjugacy growth $f(n)$ grows
very slowly for most branch groups.

\begin{question}
What can be said about the quotient $\gamma_{\Gamma}(n)/f(n)$ for the Grigorchuk group $\Gamma$ or for branch groups in general?
\end{question}

We emphasize, that even though the construction in \cite{erschler_osc} is using the Grigorchuk group, the resulting groups are no
longer branch.

\section{Conjugacy Width}

The aim of this section is to prove that every element $g \in G$, where $G$ is from a certain class of branch groups, can be
written as a product of
uniformly boundedly many
conjugates of the generators. We call this property bounded conjugacy width (BCW). 

\medskip

We start with a general discussion about BCW and will see how it relates to other algebraic properties. This will emphasize, why
some of the conditions we set for branch groups to have BCW are necessary. At the end, we draw a connection to the
palindromic width of a group and deduce that the Grigorchuk group has finite palindromic width for all generating sets.

\subsection{First results about BCW}\label{subsec_bcw}

In this subsection we discuss groups which have, or do not have, bounded conjugacy width. We first show that
bounded conjugacy width implies finite commutator width. The converse however is not true, we will give examples of
groups which have finite commutator
width but unbounded conjugacy width. In fact, we will establish that no infinite group of polynomial growth can have BCW. We then
show that BCW passes on to finite extensions, but we will give an example that it does not pass on to finite index subgroups.
Further, we will prove that any group with only finitely many conjugacy classes has
BCW and that BCW implies that the abelianisation of the group is finite. These are fairly straight-forward observations and we
list them and sketch the proofs for completeness.

\medskip

It is obvious that having BCW is independent of the chosen generating set. The following proposition says that if a group has bounded
conjugacy width then it has finite commutator width.

\begin{prop}\label{prop_conj_comm}
If a group $H$, generated by a minimal set of generators $X=\left\{x_1, \dots, x_k\right\}$,
has bounded
conjugacy width $N$, then it has finite commutator
width at most $3N$.
\end{prop}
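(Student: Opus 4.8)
The plan is to take an arbitrary element $g$ of the commutator subgroup $[H,H]$, use bounded conjugacy width to express it as a short product of conjugates of the generators, and then convert that product into a product of commutators by peeling off one commutator from each conjugate factor. Concretely, by hypothesis we may write $g = \prod_{i=1}^{N} y_i^{g_i}$ with each $y_i \in X \cup X^{-1}$ and $g_i \in H$. The basic identity in the paper's conventions is $y^{h} = h^{-1}yh = y\,[y,h]$, so each conjugate factor is the generator $y_i$ times one commutator. Two facts keep the bookkeeping clean: a conjugate of a commutator is again a commutator, since $[u,v]^{x} = [u^{x},v^{x}]$; and two conjugate factors may be interchanged at no cost, since $y_i^{g_i}y_j^{g_j} = y_j^{g_j}\,(y_i^{g_i})^{\,y_j^{g_j}}$ and the last factor is still a conjugate of $y_i$.

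First I would collect all the generators to the left: writing $y_i^{g_i} = y_i[y_i,g_i]$ and sweeping each commutator to the right past the remaining generators (conjugating it, hence keeping it a commutator) yields $g = w\cdot C$, where $w = y_1y_2\cdots y_N$ and $C$ is a product of $N$ commutators. Since $g\in[H,H]$ and $C\in[H,H]$, also $w\in[H,H]$; that is, $w$ is a word of length at most $N$ in the generators whose image in the abelianisation $H^{\mathrm{ab}}$ is trivial. It remains to express this one length-$\le N$ element of $[H,H]$ as a bounded number of commutators, and this is where the remaining commutators (and the factor three) come from.

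For that step the key tool is the identity that a conjugate of $x_j$ times a conjugate of $x_j^{-1}$ is a single commutator: a direct computation gives $s^{a}(s^{-1})^{b} = [ab^{-1},\,s^{-1}]^{\,b}$, again a conjugate of a commutator. Applying bounded conjugacy width once more to $w$ and freely reordering the resulting conjugate factors, I would pair each conjugate of a generator $x_j$ with a conjugate of $x_j^{-1}$; every matched pair collapses to one commutator, and any block of unmatched conjugates of a single $x_j$ collapses, via $x_j^{h}=x_j[x_j,h]$, to a power $x_j^{e_j}$ times commutators. This disposes of everything except a residual \emph{collected} element $r = x_1^{e_1}\cdots x_k^{e_k}\in[H,H]$, whose exponent vector $(e_1,\dots,e_k)$ represents a relation $\sum_j e_j\overline{x_j}=0$ in $H^{\mathrm{ab}}$.

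The main obstacle is precisely this residual element $r$. When $H^{\mathrm{ab}}$ is torsion-free the relation forces every $e_j=0$, so $r=1$ and nothing more is needed; but in the interesting cases — for instance the Grigorchuk group, whose abelianisation has $2$-torsion — the $e_j$ need not vanish and $r$ genuinely lies deep in $[H,H]$. Its commutator length cannot be controlled by word length alone, so this is the point at which bounded conjugacy width, rather than free-group combinatorics, must do the work, and where I expect to spend most of the effort; minimality of $X$ enters to ensure the $\overline{x_j}$ carry no relations beyond those of $H^{\mathrm{ab}}$. The favourable situation is that the abelianisation relations are already witnessed by genuine relations of $H$ (for the Grigorchuk group the relation $\overline{b}+\overline{c}+\overline{d}=0$ is realised by $bcd=1$, so $r=1$ outright); in general I would bound the commutator length of $r$ by a further application of bounded conjugacy width combined with the pairing identity above. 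Accounting for the total as $N$ commutators from the initial peeling plus at most $2N$ from resolving $w$ and $r$ then yields commutator width at most $3N$.
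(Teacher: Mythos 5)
Your overall route is the same as the paper's: peel one commutator off each conjugate factor via $y^{g}=y[y,g]$, sweep the commutators to the right (conjugation preserves being a commutator), and reduce the problem to a prefix word $w$ in the bare generators which lies in $H'$; the paper likewise arrives at $\prod_j x_{i_j}\cdot r$ with $r$ a product of commutators, collects the prefix into $x_1^{\zeta_1}\cdots x_k^{\zeta_k}\cdot f$, and then disposes of the collected part. Your bookkeeping for the first stage is in fact slightly more efficient ($N$ commutators where the paper spends $2N$), and the pairing identity $s^{a}(s^{-1})^{b}=[ab^{-1},s^{-1}]^{b}$ is a clean device for absorbing cancelling pairs of conjugates.

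The gap is exactly where you locate it, and you do not close it. After collection you are left with a residual $r=x_1^{e_1}\cdots x_k^{e_k}\in H'$ with $\sum_j|e_j|\le N$, and the remedy you propose --- apply bounded conjugacy width to $r$ once more and pair again --- is circular: it outputs another product of conjugates of generators whose generator part is again a collected word of the same shape, so nothing forces the process to terminate with $r=1$. (The paper is no more careful at this point: it simply asserts that $\prod_i x_i^{\zeta_i}=1$ because $h\in H'$. That happens to hold in the Grigorchuk group, where $a^2=b^2=c^2=d^2=1$ and $bcd=1$ witness every relation of the abelianisation by an actual relation, but it is false in general: in $Q_8=\langle i,j\rangle$ the collected word $i^2$ lies in $H'$ yet is not trivial.) If you only need \emph{finite} commutator width, the gap can be closed by a finiteness argument you did not invoke: there are only finitely many exponent vectors $(e_1,\dots,e_k)$ with $\sum_j|e_j|\le N$, hence only finitely many possible residuals, each of which lies in $H'$ and is therefore a product of some finite number of commutators, and one takes the maximum over this finite set. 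That salvages the qualitative conclusion but gives no control on the constant, so the explicit bound $3N$ is not established by your argument --- nor, for the same reason, by the step of the paper's proof that it replaces.
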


\begin{proof}
Assume an element $h \in H'$ is of the form
\begin{equation}\label{eq_comm_H} h = x_{i_1}^{\rho_1} \cdots x_{i_n}^{\rho_n},\end{equation} where the $x_{i_j} \in X$ for the
generating set $X$ and $\rho_i \in H$, $n \leq N$. We
complete the product with
\[h = \left(\prod_{j=1}^{n} x_{i_j}^{-1} x_{i_j}\right) h = \left(\prod_{j=1}^{n} x_{i_j}^{-1} x_{i_j}\right)
\prod_{j=1}^n x_{i_j}^{\rho_j}.\] In order to write the expression as a product of commutators, we shift the factors
$x_{i_j}^{-1}$
from the left side into the product on the right. We demonstrate this for the first factor $x_{i_1}^{-1}$:
\[h = x_{i_1} \cdot \left(\prod_{j=2}^{n} x_{i_j}^{-1} x_{i_j}\right) \left[x_{i_1} \cdot
\left(\prod_{j=2}^{n} x_{i_j}^{-1} x_{i_j}\right), x_{i_1}^{-1} \right] \cdot  x_{i_1}^{-1} x_{i_1}^{\rho_1} \cdot \prod_{j=2}^n
x_{i_j}^{\rho_j}\]
\[=x_{i_1} \cdot \left(\prod_{j=2}^{n} x_{i_j}^{-1} x_{i_j}\right) \left[x_{i_1} \cdot \left(\prod_{j=2}^{n}
x_{i_j}^{-1} x_{i_j}\right), x_{i_1}^{-1} \right] \cdot \left[x_{i_1},\rho_1\right] \cdot \prod_{j=2}^n x_{i_j}^{\rho_j}.\]
We are
now left to move $n-1$ factors $x_{i_j}^{-1}$ into the product on the far right. One can see that repeating this procedure will
result in a term composed of $\prod_{j=1}^n x_{i_j} \cdot r$, where $r$ is a product of $2n$ commutators. We can now express the
first few terms $z=\prod_{j=1}^n x_{i_j}$ as an element of $H/H' \cdot H'$, hence we will get a product $z=x_1^{\zeta_1} \cdots x_k^{\zeta_k} \cdot
f$, where $f$ is a product of at most $n$ commutators and $\zeta_i \in \mathbb{Z}$. By assumption $h$ was in $H'$, hence the first
term $\prod_{i=1}^k
x_i^{\zeta_i}$ is equal to $1$. In total we obtain a commutator width of at most $3N$.
\end{proof}

We will now prove that no infinite nilpotent group can have bounded conjugacy width. By a result of M. Gromov
(\cite{gromov_nil_poly}), this says that no group of non-constant polynomial growth can have BCW. It is known that all nilpotent
groups have finite commutator width from P. Stroud's thesis \cite{stroud}, hence BCW is a stronger property than
finite commutator width. This in particular implies that the converse of Proposition \ref{prop_conj_comm} is not true. 

\medskip

We first need the following observations, which we will then apply to nilpotent groups.

\begin{lemma}\label{lem_col}
\begin{enumerate}
 \item \label{lem_col_1} If a finitely generated group $G$ has BCW, then its abelianisation $G/G'$ is finite.
 \item \label{lem_col_2} If $G$ is a finitely generated nilpotent group with finite abelianisation $G/G'$, then $G$ is finite.
 \item \label{lem_col_3} Let $G$ be a finitely generated infinite nilpotent group. Then $G$ does not have bounded conjugacy width.
\end{enumerate}

\end{lemma}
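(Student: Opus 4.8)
The plan is to prove the three parts in order, obtaining part (\ref{lem_col_3}) as an immediate consequence of the first two by contraposition.

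For part (\ref{lem_col_1}) I would pass to the abelianisation. Let $X=\{x_1,\dots,x_k\}$ be a finite generating set and let $N$ be the conjugacy width bound. Given $g\in G$, bounded conjugacy width expresses $g$ as a product of at most $N$ conjugates of elements of $X\cup X^{-1}$. Since conjugation is trivial modulo $G'$, the image $\bar g\in G/G'$ is a product of at most $N$ of the elements $\bar x_1^{\pm 1},\dots,\bar x_k^{\pm 1}$. Thus every element of the abelian group $G/G'$ has word length at most $N$ in the $\bar x_i$. Now $G/G'$ is finitely generated abelian, so if it were infinite it would admit a surjection onto $\mathbb{Z}$, under which the generators map to integers $b_1,\dots,b_k$; then every integer would be a sum of at most $N$ of the $\pm b_i$, forcing $\mathbb{Z}\subseteq[-N\max_i|b_i|,\,N\max_i|b_i|]$, a contradiction. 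Hence $G/G'$ is finite.

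For part (\ref{lem_col_2}) I would induct along the lower central series $G=\gamma_1\geq\gamma_2\geq\cdots\geq\gamma_{c+1}=1$, where finite generation and nilpotency make each quotient $\gamma_i/\gamma_{i+1}$ a finitely generated abelian group. The commutator map induces a surjective homomorphism $(G/\gamma_2)\otimes(\gamma_i/\gamma_{i+1})\twoheadrightarrow\gamma_{i+1}/\gamma_{i+2}$, sending $\bar g\otimes\bar x\mapsto\overline{[g,x]}$. Since $\gamma_2=G'$, the base case $\gamma_1/\gamma_2=G/G'$ is finite by hypothesis. If $\gamma_i/\gamma_{i+1}$ is finite, then the tensor product of the two finite abelian groups on the left is finite, so its quotient $\gamma_{i+1}/\gamma_{i+2}$ is finite as well. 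As the series has finitely many terms, $|G|=\prod_i|\gamma_i/\gamma_{i+1}|$ is finite.

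Part (\ref{lem_col_3}) is then immediate: if a finitely generated infinite nilpotent group $G$ had bounded conjugacy width, part (\ref{lem_col_1}) would force $G/G'$ to be finite and part (\ref{lem_col_2}) would then force $G$ itself to be finite, contradicting the assumption that $G$ is infinite. The conceptual core is the observation in part (\ref{lem_col_1}) that bounded conjugacy width caps word length in the abelianisation; the main technical point to invoke carefully is the surjectivity of the commutator-induced map in part (\ref{lem_col_2}), which is standard for nilpotent groups but is where one must be sure the finiteness propagates correctly up the series.
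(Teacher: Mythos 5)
Your proposal is correct and follows essentially the same route as the paper: part (\ref{lem_col_1}) via passing to the abelianisation where conjugation is trivial, and part (\ref{lem_col_3}) as an immediate combination of the first two. The only difference is that for part (\ref{lem_col_2}) the paper simply cites a standard reference on nilpotent groups, whereas you supply the (correct) standard proof via the lower central series and the surjection from the tensor product of successive quotients.
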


\begin{proof}
\eqref{lem_col_1}: Obviously BCW passes to quotients and no infinite abelian group can have BCW. This shows that $G/G'$ must be
finite.
\eqref{lem_col_2}: See \cite[p. 13, Corollary 9]{dan_polycyclic}.
\eqref{lem_col_3}: Assume $G$ had BCW. Then by part (\ref{lem_col_1}) it must have finite abelianisation. However,
(\ref{lem_col_2}) implies that $G$ is finite, contradicting the assumption that $G$ is a finitely generated infinite nilpotent
group.
\end{proof}

As an application of this we can show that BCW is a stronger property than finite commutator width.

\begin{theorem}\label{thm_conj_comm}
Any finitely generated infinite nilpotent group has finite commutator width but has unbounded conjugacy width. 
\end{theorem}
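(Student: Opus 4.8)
The plan is to assemble the statement from two facts that are already in hand, so the theorem is really a corollary and the work is to combine them cleanly. First I would settle the finite commutator width half: every finitely generated nilpotent group has finite commutator width by P. Stroud's thesis \cite{stroud}, recalled in the discussion immediately preceding the statement, so nothing new is required on that side.

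For the unbounded conjugacy width half I would appeal directly to Lemma \ref{lem_col}(\ref{lem_col_3}), which asserts exactly that a finitely generated infinite nilpotent group cannot have BCW. For completeness I would recall the short argument behind it rather than merely cite the lemma: supposing such a group $G$ had BCW, part (\ref{lem_col_1}) forces the abelianisation $G/G'$ to be finite, whereupon part (\ref{lem_col_2}) — the corollary drawn from \cite{dan_polycyclic} — forces $G$ itself to be finite, contradicting the hypothesis that $G$ is infinite. Combining the two halves yields the theorem and confirms, as promised in the remark following Proposition \ref{prop_conj_comm}, that BCW is strictly stronger than finite commutator width on this class of groups.

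Since the genuine content is carried entirely by Lemma \ref{lem_col} together with the cited external results, I do not expect any real obstacle at the level of this statement itself; the difficulty is entirely front-loaded into the lemma. The one point that merits a moment's care is that the comparison of the two widths should be understood with respect to the same (arbitrary) finite generating set. However, both finite commutator width and bounded conjugacy width are invariant under a change of finite generating set — the latter noted explicitly just before Proposition \ref{prop_conj_comm} — so this causes no trouble, and the two conclusions may be asserted unconditionally for the group $G$.
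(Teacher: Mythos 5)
Your proposal is correct and follows essentially the same route as the paper: the finite commutator width half is Stroud's result \cite{stroud}, and the unbounded conjugacy width half is exactly Lemma \ref{lem_col}(\ref{lem_col_3}), whose short proof you recall verbatim in spirit. Nothing is missing.
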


\begin{proof}
Work by P. Stroud in his thesis \cite{stroud} shows that every finitely generated nilpotent group has
finite commutator
width. Any infinite nilpotent group is an example of a group which has finite commutator width but does not have BCW by
Lemma \ref{lem_col}(\ref{lem_col_3}).
\end{proof}

The following lemma applies to the groups constructed by V. Ivanov (\cite{olshanskii_defining_rel}) or by D. Osin in
\cite{osin_smallCancellation}. 

\begin{lemma}\label{lemma_conj_classes}
Assume that a finitely generated group $G$ has only $n$ conjugacy classes. Then it has bounded conjugacy width.
\end{lemma}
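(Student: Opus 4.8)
The plan is to reduce the problem to the finitely many conjugacy class representatives and then to exploit the fact that conjugation preserves the property of being a product of conjugates of generators. I fix a finite generating set $X$ of $G$, which I may take to be symmetric (i.e. $X=X^{-1}$) since, as already noted, having BCW does not depend on the chosen generating set; this ensures that inverses of generators are themselves conjugates of generators. I then fix representatives $c_1,\dots,c_n$ for the $n$ conjugacy classes of $G$.

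First I would observe that each representative has bounded conjugacy width individually, and in a completely trivial way: writing $c_i$ as a word $x_{i_1}\cdots x_{i_{m_i}}$ in the generators exhibits $c_i$ as a product of $m_i=l_X(c_i)$ conjugates of generators, because each generator is a (trivial) conjugate of itself. Setting $N=\max_{1\le i\le n} l_X(c_i)$, which is finite since there are only finitely many representatives and each lies in the finitely generated group $G$, every $c_i$ is a product of at most $N$ conjugates of generators.

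The key step is the observation that this property is conjugation-invariant. If $w=y_1\cdots y_k$ with each $y_j$ a conjugate of a generator, then for any $x\in G$ we have $w^x=y_1^x\cdots y_k^x$, and each $y_j^x$ is again a conjugate of the same generator. Hence a conjugate of a product of $k$ conjugates of generators is again such a product with the same number of factors. Taking an arbitrary $g\in G$, it is conjugate to some $c_i$, say $g=c_i^x$, so applying this observation to the expression of $c_i$ yields $g$ as a product of at most $N$ conjugates of generators. Therefore $G$ has bounded conjugacy width, with bound $N$.

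I do not anticipate a genuine obstacle here; the only points requiring care are to justify that the generating set may be assumed symmetric (so that the reduction makes sense) and to note that the identity class contributes the empty product and so is harmlessly absorbed into the bound $N$.
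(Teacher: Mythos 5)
Your proof is correct and follows essentially the same route as the paper's: pick shortest representatives of the finitely many conjugacy classes, bound their lengths by a maximum $N$, and use that conjugation preserves being a product of $k$ conjugates of generators. You merely spell out the details (symmetrizing the generating set, the conjugation-invariance step) that the paper leaves implicit.
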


\begin{proof}
Take for each conjugacy class a representative of shortest length.
Because we only have $n$ conjugacy classes, we can take the maximum over the lengths of these representatives, denoted by $M$.
Then
it follows that each element of $G$ is a product of at most $M$ conjugates of the generators.
\end{proof}

Together with Proposition \ref{prop_conj_comm} and Lemma \ref{lem_col}(\ref{lem_col_1}) this implies that these groups have finite
commutator width and finite abelianisation. 

\begin{theorem}\label{thm_finite_ext}
If a finitely generated group $H$ has BCW and $G$ is a finite extension of $H$, then $G$ has BCW.
\end{theorem}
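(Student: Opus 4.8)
The plan is to exploit the fact, observed above, that having BCW is independent of the chosen generating set, so that I may build a generating set for $G$ adapted to the subgroup $H$. Since $H$ is a finite-index subgroup of $G$ and is finitely generated, $G$ is itself finitely generated: a finite generating set is obtained by adjoining a transversal to generators of $H$. Concretely, fix a finite generating set $X$ of $H$ witnessing BCW with bound $N$, and let $T=\{t_1,\dots,t_m\}$ be a right transversal of $H$ in $G$, say with $t_1=1$. I would take $Y=X\cup T$ as the generating set of $G$; it generates precisely because $H=\langle X\rangle$ together with the coset representatives exhausts $G$.

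The second step is to decompose an arbitrary $g\in G$ along the cosets. Writing $g=h\,t_j$ with $h\in H$ and $t_j\in T$, the element $h$ can, by the BCW hypothesis for $H$, be expressed as a product
\[h=x_{i_1}^{g_1}\cdots x_{i_k}^{g_k},\qquad k\le N,\]
with each $x_{i_\ell}\in X$ and each conjugating element $g_\ell\in H$. The key observation is that every such factor $x_{i_\ell}^{g_\ell}$ is already a conjugate of a generator of $G$: indeed $x_{i_\ell}\in X\subseteq Y$ is a generator of $G$, and $g_\ell\in H\subseteq G$, so conjugation by an element of $H$ is a special case of conjugation by an element of $G$. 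Hence $h$ is a product of at most $N$ conjugates of $Y$-generators.

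Finally, the leftover coset representative $t_j$ lies in $Y$, hence is itself a conjugate of a generator (by the identity). Concatenating, $g=h\,t_j$ is a product of at most $N+1$ conjugates of generators of $G$. As $g$ was arbitrary, $G$ has bounded conjugacy width $N+1$ with respect to $Y$, and by the generating-set independence of BCW this yields BCW for $G$ with respect to every finite generating set.

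I expect this to be essentially an accounting argument with no serious obstacle; the only point requiring care is the interface between conjugation inside $H$ and inside $G$. One must confirm that the conjugating elements supplied by the BCW of $H$, which lie in $H$, remain admissible as conjugators in $G$ — which holds precisely because $H\le G$ — and that the coset-representative part contributes only a bounded additive overhead, independent of $g$. I would also note that normality of $H$ is never used: only the finiteness of the index enters, through the existence of a finite transversal.
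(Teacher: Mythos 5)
Your proof is correct and follows essentially the same route as the paper: decompose $g$ as an element of $H$ times a coset representative, apply the BCW of $H$ to the first factor, and absorb the bounded coset part as finitely many (trivial) conjugates of generators. The only cosmetic difference is that you work with the adapted generating set $X\cup T$ and then invoke generating-set independence, whereas the paper fixes an arbitrary generating set of $G$ from the start and rewrites the generators of $H$ as bounded-length words in it, obtaining the explicit bound $T+KM$.
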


\begin{proof}
Let $H=\left<h_1, \dots, h_m\right>$ be a finite index subgroup of $G$, such that $H$ has BCW. Let $M$ be the maximal length of
the generators of $H$ with respect to the finite generating set $X=\left\{x_1, \dots, x_n\right\}$ of $G$. Then by assumption,
every
element of $H$ can be written as
\[h = \prod_{i=1}^K h_i^{t_i} = \prod_{i=1}^K \left(\prod_{j=1}^M x_{k_{i,j}}\right)^{t_i} = \prod_{i=1}^{KM} x_{s_i}^{t_i},
\quad t_i \in H.\] The
latter is
a finite product of conjugates of elements from $X$. Every element $g \in G$ can be written as $g = f \cdot h$ for $f \in G/N, h
\in N$,
where $N = \bigcap_{g \in H} H^g$, where again $N$ has finite index in $G$ because $H$ has. If we take for $f$ the coset
representative of shortest length, then we can denote the maximum over all lengths of such minimal coset representatives by
$T$. It is then clear that every element of $H$ is a product of at most $T+KM$ conjugates of the generators $\left\{x_1, \dots,
x_n\right\}$.
\end{proof}

This implies that the group of exponential conjugacy growth constructed by M. Hull and D. Osin in \cite{hull_osin} with the index
$2$
subgroup with $2$ conjugacy classes has finite commutator width and finite abelianisation. On the other hand, the following
example shows that there exist groups with bounded conjugacy width which have a finite index subgroup that has unbounded
conjugacy width.

\begin{example}\label{ex_dihedreal}
Let $G=\left<r,s \h | \h s^2=r^2=1\right>$ be the infinite dihedreal group. Then the infinite abelian subgroup $\left<sr\right>$
has index $2$. On the other hand, every element of $G$ can be written as a product of at most $2$ conjugates of $r$ and $s$.
\end{example}

\subsection{Certain branch groups}

The aim of this subsection is to demonstrate that if a branch group is just infinite, contains a rooted automorphism and  has
finite commutator width, then every element can be written as a product of uniformly boundedly many conjugates of the generators. 

\begin{lemma}\label{lemma_comm_K}
Assume that $G$ is a branch group that contains a rooted element and has finite commutator width. Let $H_i, L_i$ be its
branching structure as defined above. Then every element of the form $\left([\kappa,\sigma],1\right)_1, \kappa, \sigma \in L_1$ is
a
product of
at most $4 \cdot M$ conjugates of the generators of $G$, where $M$ is the length of the shortest rooted element contained in $G$.
\end{lemma}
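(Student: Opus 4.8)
The goal is to express $([\kappa,\sigma],1)_1$ with $\kappa,\sigma \in L_1$ as a product of few conjugates of generators. My strategy would be to realize the commutator $[\kappa,\sigma]$ \emph{sitting in the first coordinate} as an honest commutator of two elements of $G$, and then absorb the first-level structure by conjugating with a rooted element. The starting observation is that since $(L_1,\dots,L_1)_1 = H_1 \leq G$, the element $(\kappa,1)_1$ and $(\sigma,1)_1$ both lie in $G$. A direct computation in the direct-product factor $L_1^{(1)}$ gives
\[
[(\kappa,1)_1,(\sigma,1)_1] = ([\kappa,\sigma],1)_1,
\]
because the second (and further) coordinates are all $1$ and commutators are computed coordinatewise in the direct product $H_1 = L_1^{(1)}\times\cdots\times L_1^{(k_1)}$. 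So the element in question is literally a single commutator of two elements of $G$.

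\emph{Rewriting a commutator as conjugates.} The next step is the standard identity that lets one trade a commutator for a bounded number of conjugates: for any $u,v \in G$,
\[
[u,v] = u^{-1}v^{-1}uv = (u^{-1})\,(u^{-1})^{v^{-1}}\cdots,
\]
more usefully $[u,v]=u^{-1}\cdot u^{v}$, which is a product of \emph{two} conjugates of $u$ (namely $u^{-1}$ and $u^{v}$). Applying this with $u=(\kappa,1)_1$ and $v=(\sigma,1)_1$ expresses $([\kappa,\sigma],1)_1$ as a product of two conjugates of $(\kappa,1)_1$. The remaining task is therefore to bound the number of conjugates of generators needed to write the \emph{single} element $(\kappa,1)_1$ (up to conjugacy), since each conjugate of $(\kappa,1)_1$ is a conjugate of such a word.

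\emph{Reducing $(\kappa,1)_1$ to generators via the rooted element.} Here is where the rooted element and finite commutator width enter, and where I expect the real work to be. A rooted element $\rho$ of shortest length $M$ permutes the first-level subtrees, so conjugation by $\rho$ cyclically moves the active coordinate; this is the mechanism that lets us relate $(\kappa,1)_1$ to the ``diagonal'' element $(\kappa,\kappa,\dots,\kappa)_1$ or to shifts of $\kappa$ into other coordinates, at a cost of $M$ letters per conjugation. The plan is to use finite commutator width to write $\kappa$ itself (as an element of $L_1 \cong$ the relevant factor) as a bounded product of commutators, push each such commutator into the first coordinate as above, and track that each of the resulting conjugating elements differs from a generator word by at most a rooted-element conjugation of length $M$. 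Counting carefully, two conjugates from the commutator trick, each of which costs up to $2M$ to realize as a conjugate of a genuine generator after moving through the rooted permutation, should give the claimed bound of $4M$.

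\emph{Main obstacle.} The delicate point is bookkeeping the interaction between the direct-product decomposition and the word length \emph{in $G$}: the generators $\{y_i\}$ of $(H,1,\dots,1)_1$ are not generators of $G$, so every manipulation must ultimately be re-expressed in terms of conjugates of the actual generators of $G$, and the rooted element of length $M$ is precisely the tool that converts first-coordinate data into such conjugates. I expect the hardest step to be verifying that the conjugating elements produced by the commutator identity can each be taken to be (a conjugate of) a single generator at cost $M$, rather than a long word, so that the final tally is the uniform constant $4M$ and does not secretly depend on $\kappa$ or $\sigma$.
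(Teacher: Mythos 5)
Your opening reduction is correct but it sends the argument in a direction that cannot be completed. Writing $([\kappa,\sigma],1)_1=[(\kappa,1)_1,(\sigma,1)_1]=u^{-1}\cdot u^{v}$ with $u=(\kappa,1)_1$ reduces the lemma to expressing the single element $(\kappa,1)_1$ as a bounded product of conjugates of generators --- but that is essentially the full strength of what this whole section is trying to establish, and there is no reason it should cost only $O(M)$: the element $\kappa\in L_1$ is arbitrary and can be arbitrarily long. Your proposed repair --- use finite commutator width to write $\kappa$ as a bounded product of commutators and push each such commutator into the first coordinate --- is circular, since a commutator pushed into the first coordinate is exactly an element of the form $([\alpha,\beta],1)_1$, i.e.\ the object this lemma is about; it would moreover give a bound depending on the commutator width rather than the uniform $4M$ claimed. (In fact the paper's proof of this particular lemma never uses the finite commutator width hypothesis.)

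The idea you are missing is that $\kappa$ and $\sigma$ should appear only as \emph{conjugators}, never as elements to be expressed. Let $x$ be a rooted element of length $M$, say moving the first subtree to the second, and set $\kappa=(\kappa_1^{-1},1,\dots,1)_1$. Then
\[
t=x^{\kappa}x^{-1}=\kappa^{-1}\cdot\kappa^{x^{-1}}=\left(\kappa_1,\kappa_1^{-1},1,\dots,1\right)_1
\]
is a product of two conjugates of $x^{\pm1}$ \emph{no matter how long $\kappa_1$ is}, because $\kappa$ enters only in the exponent. Conjugating by $\lambda=(\kappa_2,1,\dots,1)_1$ and forming $t^{-1}t^{\lambda}$ cancels the second coordinate and leaves $\left([\kappa_1,\kappa_2],1,\dots,1\right)_1$ as a product of four conjugates of $x^{\pm1}$, hence of $4M$ conjugates of the generators of $G$. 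Your write-up correctly identifies the dangerous step (``reduce $(\kappa,1)_1$ to generators at cost $M$'') but does not supply an argument for it, and no such argument exists along that route; this is a genuine gap.
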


\begin{proof}
We aim to express a commutator $\left(\left[\kappa_1, \kappa_2\right], 1, \dots, 1
\right)_1 \in \left(L_1',1\right)_1$ as a product of conjugates of the generators. By assumption there exists a rooted element
$x$.
Without
loss of generality we can assume that $x$ acts in such a way that it moves the leftmost top subtree to the second leftmost top
subtree. Choose
$\kappa=\left(\kappa_1^{-1},
1, \dots, 1\right)_1$ with $\kappa_1 \in L_1$. Then
\[t= x^\kappa x^{-1} = \left(\kappa_1, \kappa_1^{-1}, 1, \dots, 1\right)_1.\] We now proceed and conjugate $t$ with $\lambda =
\left(\kappa_2, 1, \dots, 1 \right)_1$ to get
\[t^{-1} t^\lambda = \left(\kappa_1^{-1} \kappa_1^{\kappa_2},1, \dots, 1\right) = \left(\left[\kappa_1, \kappa_2\right], 1,
\dots,
1\right)_1.\] We see that this is a product of $4$ conjugates of the element $x$, hence of $4\cdot l(x)$ conjugates of the
generators of
$G$.
\end{proof}

\begin{lemma}\label{lemma_comm_G}
Let $G$ be a just infinite branch group that contains at least one rooted element. Then there exists a number $T$ such that every
commutator of the form $[\alpha, \beta]$ for $\alpha, \beta \in G$ is a product of at most $T$ conjugates of the generators of
$G$.

The bound $T$ can be explicitly expressed as $4M+2S$, where $M$ is the maximal length of a minimal coset representative of the
branching subgroup $H_1$ of $G$ and $S$ is the constant coming from Lemma \ref{lemma_comm_K}.
\end{lemma}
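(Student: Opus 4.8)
The plan is to reduce an arbitrary commutator $[\alpha,\beta]$ to the single-coordinate commutators already produced in Lemma~\ref{lemma_comm_K}. The feature of that lemma that makes a \emph{uniform} bound possible is that $([\kappa,\sigma],1,\dots,1)_1$ is realised at the fixed cost of four conjugates of the rooted element, \emph{no matter how long} $\kappa$ and $\sigma$ are as words in $G$: they enter only as conjugating elements, and conjugation is free in the conjugate count. Hence the word lengths of $\alpha$ and $\beta$ will be irrelevant, and the entire task is to rewrite $[\alpha,\beta]$ as a product of boundedly many such single-coordinate commutators together with a short word that records the image of $[\alpha,\beta]$ in the finite quotient $G/H_1$.

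First I would peel off the part of $[\alpha,\beta]$ that leaves the branching subgroup. Since $H_1$ is normal of finite index, I choose minimal coset representatives $\bar\alpha,\bar\beta$ of $\alpha,\beta$ modulo $H_1$, each of length at most $M$, and write $\alpha=u\bar\alpha$, $\beta=v\bar\beta$ with $u,v\in H_1$. Expanding $[u\bar\alpha,v\bar\beta]$ by the identities $[xy,z]=[x,z]^{y}[y,z]$ and $[x,yz]=[x,z]\,[x,y]^{z}$, and using that conjugation does not change the number of conjugates of generators needed, the representatives $\bar\alpha^{\pm1},\bar\beta^{\pm1}$ contribute at most four conjugates of words of length $\le M$ — the $4M$ term — while the remaining factors are controlled inside $H_1$. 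This is exactly where just-infiniteness enters: it forces $L_1/L_1'$ to be finite, so that $H_1'=(L_1')^{k_1}$ has finite index in $H_1$ and the residue of $[\alpha,\beta]$ left over from the coset reduction can be taken to lie in $(L_1')^{k_1}$, that is, in the set of elements that Lemma~\ref{lemma_comm_K} is able to manufacture; the finite abelian quotient $H_1/H_1'$ is absorbed for free into the conjugating elements precisely because the cost in Lemma~\ref{lemma_comm_K} is independent of their size.

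It then remains to realise an element of $(L_1')^{k_1}$, i.e.\ a product $([u_1,v_1],\dots,[u_{k_1},v_{k_1}])_1$ of single-coordinate commutators, as only $2S$ conjugates of the generators. Here I would take a rooted element that cyclically permutes all $k_1$ top subtrees and run a telescoping version of the construction in Lemma~\ref{lemma_comm_K}: a single conjugate of the form $x^{\kappa}x^{-1}$ already spreads one factor across neighbouring coordinates, so that by choosing $\kappa$ appropriately the whole product over the $k_1$ coordinates is obtained from two such building blocks, each of cost $S$. The main obstacle is precisely this concentration step — verifying that a full-support element of $(L_1')^{k_1}$ can be assembled from a bounded number (two) of applications rather than one per coordinate — and it is resolved by combining the transitivity of the rooted action on the coordinates with the size-independence of the cost $S$. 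Putting the two parts together yields $T=4M+2S$.
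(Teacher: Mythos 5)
Your first step --- splitting $\alpha=u\bar\alpha$, $\beta=v\bar\beta$ along the finite-index subgroup $H_1$, expanding with the standard commutator identities, and charging the coset representatives at most $4M$ conjugates --- is exactly what the paper does, and is fine. The trouble starts afterwards. Your appeal to just-infiniteness is both unnecessary and not quite coherent: after the expansion, the residue involving $u,v$ is the \emph{single} commutator $[u,v]$ of two elements of the direct product $H_1=L_1\times\cdots\times L_1$, which is automatically the coordinatewise tuple $\left([u_1,v_1],\dots,[u_{k_1},v_{k_1}]\right)_1\in (L_1')^{k_1}$; no finite-index argument about $H_1'$ in $H_1$ is needed, and a nontrivial class in $H_1/H_1'$ could not be ``absorbed into the conjugating elements'' anyway, since conjugation preserves that class. (Just-infiniteness is genuinely used in Theorem \ref{thm_conj_width}, to make $G'$ of finite index, not in this lemma.)

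The genuine gap is the ``telescoping'' step: the claim that the full tuple can be realised with only two building blocks of the form $x^{\kappa}x^{-1}$ is asserted, not proved, and the naive version fails. With $x$ a rooted element acting as a $k_1$-cycle $\pi$ and $\kappa=(a_1,\dots,a_{k_1})_1$, one computes $x^{\kappa}x^{-1}=\left(a_1^{-1}a_{\pi(1)},\dots\right)_1$, so after the second conjugation you obtain $\left([b_1,v_1],\dots,[b_{k_1},v_{k_1}]\right)_1$ with $b_i=a_i^{-1}a_{\pi(i)}$; prescribing $b_i=u_i$ for every $i$ forces $u_1u_{\pi(1)}\cdots u_{\pi^{k_1-1}(1)}=1$ around the cycle, which generically fails, and patching the defect with a second block leaves a non-commutator in one coordinate, to which Lemma \ref{lemma_comm_K} does not apply. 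The paper sidesteps all of this: it applies Lemma \ref{lemma_comm_K} once per coordinate of $[\kappa,\lambda]=\left([\kappa_0,\lambda_0],[\kappa_1,\lambda_1]\right)_1$ (working with $H_1=L_1\times L_1$, whence the $2S$); for general $k_1$ the honest count by this route is $4M+k_1S$, not $4M+2S$.
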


\begin{proof}
If $[\gamma,\xi]$ is a commutator with $\gamma,\xi \in G$, then we can write $\gamma,\xi$ as $\gamma=\sigma \kappa, \xi=\tau
\lambda$ with $\kappa,\lambda \in H_1$. Because $H_1$ has finite
index in $G$, there are only finitely many minimal choices for $\sigma,\tau$, hence their length is uniformly bounded over
all
elements. Denote the maximal length of a coset representative by $M$ and denote $\sigma=x_1 \cdots x_n$, $\tau=y_1\dots y_n$ for
$n \leq M$, where the $x_i$ and $y_i$ are some generators of $G$. The
commutator $[\gamma,\xi]$ can with the help of basic commutator identities be written as
\[[\gamma,\xi] = [\sigma \kappa, \tau \lambda] = [\sigma,\tau \lambda]^{\kappa} [\kappa,\tau \lambda] = [x_1\dots
x_n,\tau \lambda]^{\kappa}[\kappa,\lambda][\kappa,\tau]^{\lambda}\]\[=[x_1,\tau \lambda]^{\zeta_1}[x_2\dots
x_n,\tau \lambda]^{\kappa}
[\kappa,\lambda][\kappa,y_1\dots y_n]^{\lambda}=\left(\prod_{i=1}^n \left[x_i,\tau \lambda\right]^{\zeta_i}\right)
[\kappa,\lambda] \left(\prod_{i=1}^n \left[\kappa, y_{n-i+1}\right]^{\eta_i}\right),\] where $\zeta_i=\kappa \cdot \prod_{j=i+1}^n
x_i$ and $\eta_i=\lambda \cdot \prod_{j=n-i+2}^{n} y_i$. By assumption we have $\kappa, \lambda \in H_1=L_1\times L_1$, so $[\kappa,
\lambda] \in (L_1',L_1')_1$ and it actually has the form
$\left([\kappa_0,\lambda_0],[\kappa_1, \lambda_1]\right)_1$. By Lemma \ref{lemma_comm_K} there exists a number $t$ such that the
commutator in each component is a product of at most $t$ conjugates of the generators. Each commutator of the form
$\left[x_i,\rho\right]$ for some $\rho \in G$ is a product of $2$ conjugates of $x_i$:
\[\left[x_i, \rho\right] = x_i^{-1} \cdot x_i^{\rho}.\] 

So we get $2\cdot 2n$ conjugates for the commutators and $2 \cdot t$ more for
the commutator $[\kappa, \lambda]$. In total we hence need $4n+2t$ conjugates of the generators to express a commutator of $G$.
\end{proof}

\begin{theorem}\label{thm_conj_width}
Let $G$ be a just infinite branch group that contains a rooted element and that has finite commutator width. Then $G$ has bounded
conjugacy width.
\end{theorem}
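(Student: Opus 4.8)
The plan is to assemble the theorem from the two auxiliary lemmas already in hand, letting each of the three hypotheses play a distinct role: \emph{just infinite} will force a finite abelianisation, \emph{finite commutator width} will reduce an arbitrary element of $G'$ to a uniformly bounded product of commutators, and the \emph{rooted element} (via Lemma \ref{lemma_comm_G}) will turn each commutator into a uniformly bounded product of conjugates of the generators. First I would record that a branch group is never abelian: for $i \geq 1$ conjugation transitively permutes the $k_i \geq 2$ factors of $H_i$, which is impossible if $G$ is abelian. Hence $G' \neq 1$, and since $G$ is just infinite every nontrivial normal subgroup has finite index, so in particular $G/G'$ is finite.

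Next I would fix a finite transversal of $G'$ in $G$ and let $R$ be the maximum word length of its members (finite, since there are only finitely many cosets). For an arbitrary $g \in G$ I write $g = r \cdot c$ with $r$ in the transversal and $c \in G'$. The factor $r$ is a product of at most $R$ generators, and each generator is a conjugate of itself (by the identity), so $r$ already costs at most $R$ conjugates of generators, with $R$ independent of $g$.

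It then remains to treat $c \in G'$. Finite commutator width provides a constant $W$ with $c = \prod_{j=1}^{W} [\alpha_j, \beta_j]$ for suitable $\alpha_j, \beta_j \in G$, and Lemma \ref{lemma_comm_G} supplies a constant $T = 4M + 2S$ so that each $[\alpha_j, \beta_j]$ is a product of at most $T$ conjugates of the generators. Thus $c$ is a product of at most $WT$ such conjugates, and $g = rc$ of at most $R + WT$ of them; as this bound is independent of $g$, the group $G$ has bounded conjugacy width.

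The genuine difficulty is not in this assembly but in the already-established Lemmas \ref{lemma_comm_K} and \ref{lemma_comm_G}, where the self-similar decomposition $H_1 = L_1 \times L_1$ and the capacity of a rooted element to shuffle the top subtrees are used to realise a single commutator as a bounded product of conjugates; that reduction is the step I expect to be the real obstacle. Granting those lemmas, the theorem follows quickly, and the necessity of the hypotheses becomes transparent: without \emph{just infinite} the quotient $G/G'$ may be infinite (compare the nilpotent examples of Theorem \ref{thm_conj_comm}), and without finite commutator width the constant $W$ disappears, either failure destroying bounded conjugacy width.
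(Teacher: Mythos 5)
Your proposal is correct and follows essentially the same route as the paper: decompose $g$ modulo $G'$ using a finite transversal (finite index because $G$ is just infinite), expand the $G'$-part into boundedly many commutators via finite commutator width, and convert each commutator into boundedly many conjugates of generators via Lemma \ref{lemma_comm_G}, giving a bound of the form $R + WT$ exactly as in the paper's $M + C\cdot T$. Your added remark that a branch group is non-abelian, so $G'\neq 1$ and the just-infinite hypothesis really does force $G/G'$ to be finite, is a small justification the paper leaves implicit.
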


\begin{proof}
Because $G$ is just infinite the normal subgroup $G'$ of $G$ has finite index in $G$. Hence every element can be
written in the form $\gamma = \xi \cdot \rho$, for some $\rho \in G'$ and there are only finitely many minimal choices for $\xi$.
Denote by $M$ the length of the longest minimal coset representative. The fact that $G$ has finite commutator width gives us that
$\rho$ is a product of at most $C$ commutators of the form $[x,y], x,y \in G$. By Lemma \ref{lemma_comm_G}, there exists a number
$T$ such that each of them is a product of at most $T$ conjugates of the generators of $G$. Hence every element of $G$ is a
product of at most $M + C \cdot T$ conjugates of the generators of $G$.
\end{proof}

We can see from Proposition \ref{prop_conj_comm} that the condition of having finite commutator width is necessary. However,
proving that a group has BCW would also provide an effective way to prove that it has finite commutator width. To see that the
condition to
be just infinite is necessary, we need another theorem.

\begin{theorem}\cite{grigor_bg}
A branch group $G$ is just infinite if and only if all $H_i'$ have finite index in $H_i$.
\end{theorem}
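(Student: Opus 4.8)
The plan is to reduce the whole statement to one structural fact about normal subgroups of branch groups, namely that \emph{every nontrivial normal subgroup of a branch group $G$ contains $H_n'$ for some $n$} (a theorem of Grigorchuk, see \cite{grigor_bg}); once this is available both implications are short. The first observation is that each $H_i'$ is itself normal in $G$: since $H_i \triangleleft G$ we have $gH_i'g^{-1} = [gH_ig^{-1}, gH_ig^{-1}] = [H_i,H_i] = H_i'$ for every $g \in G$. Note also that $[G:H_i'] = [G:H_i]\,[H_i:H_i']$, so, as $[G:H_i]<\infty$, the subgroup $H_i'$ has finite index in $H_i$ if and only if it has finite index in $G$.

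For the direction ``$\Leftarrow$'', assume every $H_i'$ has finite index in $H_i$, equivalently in $G$. Let $N \neq 1$ be normal in $G$. By the structural fact $N \supseteq H_n'$ for some $n$, whence $[G:N] \leq [G:H_n'] < \infty$. Thus every nontrivial normal subgroup has finite index, and since branch groups are infinite, $G$ is just infinite. For the direction ``$\Rightarrow$'', assume $G$ is just infinite. Each $H_i'$ is normal in $G$, so it suffices to show $H_i' \neq 1$; just infiniteness then forces $[G:H_i'] < \infty$, hence $[H_i:H_i'] < \infty$. To see $H_i' \neq 1$, suppose instead $H_i$ were abelian. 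Then $G$ is virtually abelian, hence virtually $\mathbb{Z}^m$ for some $m$, and in such a group every infinite subgroup meets the finite-index $\mathbb{Z}^m$ in an infinite subgroup and so contains an element of infinite order. Now $H_i = L_i^{(1)} \times \cdots \times L_i^{(k_i)}$ has finite index in the infinite group $G$, so each $L_i$ is infinite and contains an element of infinite order; the direct product then yields a copy of $\mathbb{Z}^{k_i}$ inside $G$. Since $k_{j+1} \geq 2k_j$ gives $k_i \to \infty$, this contradicts $G$ being virtually $\mathbb{Z}^m$. Hence $H_i$ is nonabelian and $H_i' \neq 1$.

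The main obstacle is the structural fact itself, which I would prove by the standard commutator argument. Given $1 \neq g \in N$, faithfulness of the action (i.e. $\bigcap_n \St_G(n) = 1$) yields a vertex $v$ with $g(v) = w \neq v$; passing to the level of $v$ we may assume $T_v$ and $T_w$ are disjoint, and likewise $v' = g^{-1}v \neq v$ has $T_{v'}$ disjoint from $T_v$. For $a \in \rst_G(v)$ the commutator $[a,g] = a^{-1}(g^{-1}ag)$ lies in $N$ and factors as a product of $a^{-1} \in \rst_G(v)$ and $g^{-1}ag \in \rst_G(v')$ over disjoint subtrees. Forming the commutator of two such elements, and using that the $v$-parts and $v'$-parts commute, produces elements of $N$ whose $v$-components run over all commutators of elements of $\rst_G(v)$.

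The delicate point, which I expect to be the crux, is to strip off the unwanted $v'$-component and conclude $\rst_G(v)' \subseteq N$; this is achieved with the help of a third disjoint vertex (or by exploiting the level-transitivity of $G$) to cancel the extra factor. Once $\rst_G(v)' \subseteq N$, conjugating by $G$ and using transitivity on the level of $v$ gives $\rst_G(u)' \subseteq N$ for all $u$ at that level, hence $\rst_G(n)' \subseteq N$. Translating this back into the algebraic branching data of Definition \ref{def_branch} identifies $\rst_G(n)'$ with $H_n'$ up to finite index, which is all that the two implications above require, completing the argument.
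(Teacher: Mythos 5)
The paper offers no proof of this statement; it is imported verbatim from \cite{grigor_bg}, so the only fair comparison is with Grigorchuk's original argument --- and your proposal follows essentially that route: reduce everything to the fact that a nontrivial normal subgroup of a branch group contains the derived subgroup of a level-$n$ branching subgroup, then use normality of $H_i'$ in $G$ for both implications. The reduction itself is sound, including the ``$\Rightarrow$'' direction, where your virtually-abelian contradiction works once one notes (as you implicitly do) that if some $H_i$ is abelian then so are all deeper $H_j$, whose free abelian ranks $k_j$ eventually exceed the rank of any fixed finite-index $\mathbb{Z}^m$.

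Two remarks on the parts you leave open. First, the step you flag as ``the delicate point'' is not delicate and needs no third vertex: having produced $h=[a,g]=a^{-1}\cdot a^g$ with $a^{-1}\in\rst_G(v)$ and $a^g\in\rst_G(v')$ supported on disjoint subtrees, do not commute two such elements (that leaves a nontrivial $v'$-component $[a^g,b^g]$); instead commute $h$ with a plain $b\in\rst_G(v)$. Since $a^g$ commutes with both $a$ and $b$, one gets $[h,b]=[a^{-1},b]\in N$ directly, and these generate $\rst_G(v)'$. Second, the closing sentence ``identifies $\rst_G(n)'$ with $H_n'$ up to finite index'' is the one genuine gap: commensurability of $\rst_G(n)'$ and $H_n'$ does not give $H_n'\subseteq N$, which is what your ``$\Leftarrow$'' implication uses. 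The clean fix is to run the whole normal-subgroup argument in the algebraic setting of Definition \ref{def_branch}, with the factors $L_n^{(j)}$ playing the role of the rigid vertex stabilizers (they centralize one another because $H_n$ is their direct product) and condition (4) replacing level-transitivity; this yields $(L_n^{(j)})'\subseteq N$ for all $j$, hence $H_n'=(L_n^{(1)})'\times\cdots\times(L_n^{(k_n)})'\subseteq N$, which is exactly what the two implications require.
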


In particular, this implies that $G$ cannot be just infinite if $H_0=G$ does not have finite abelianisation. Hence we obtain
that the condition to be just infinite cannot
be omitted.

\begin{cor}
There exist branch groups which do not have BCW.
\end{cor}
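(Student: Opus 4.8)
The plan is to obtain the corollary as the promised witness that the ``just infinite'' hypothesis in Theorem~\ref{thm_conj_width} cannot be dropped, reading it off from the necessity discussion just given. The logical engine is Lemma~\ref{lem_col}(\ref{lem_col_1}): a finitely generated group with bounded conjugacy width has finite abelianisation. Taking the contrapositive, it suffices to produce a single finitely generated branch group $G$ whose abelianisation $G/G'$ is infinite; for such a $G$, Lemma~\ref{lem_col}(\ref{lem_col_1}) forbids BCW outright, with no further computation.

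First I would record the compatibility with what precedes, both to motivate the choice of example and to close the necessity argument. By the theorem of \cite{grigor_bg}, a branch group is just infinite precisely when every $H_i'$ has finite index in $H_i$; specialising to $i=0$, where $H_0=G$, a just infinite branch group always has finite abelianisation. Hence any branch group with infinite abelianisation is automatically \emph{not} just infinite, which is exactly why such a group lies outside the scope of Theorem~\ref{thm_conj_width} and confirms that the hypothesis is genuinely needed. The substantive step is then to exhibit a finitely generated branch group with infinite abelianisation: I would take one from the non-just-infinite branch groups already referenced here --- the constructions of Segal \cite{Segal_subgroupGrowth} and those studied in \cite{me_branchgroups} --- and verify that it surjects onto an infinite abelian group, equivalently onto $\mathbb{Z}$ since the group is finitely generated. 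Invoking Lemma~\ref{lem_col}(\ref{lem_col_1}) then shows it has no BCW, which proves the corollary.

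The main obstacle is precisely this construction and verification. Branching and large abelianisation pull in opposite directions: the requirement that $\rst_G(n)$ have finite index at \emph{every} level tends to push the group close to perfect, and the self-similar groups for which an infinite abelian quotient is most visible (for instance the Basilica group, with abelianisation $\mathbb{Z}^2$) are typically only weakly branch rather than branch. The delicate point is therefore to keep all rigid stabilisers of finite index while still retaining an infinite abelian quotient; I expect to engineer the quotient map on the ``rooted'' part of the action, away from the rigid vertex stabilisers that witness branchness, so that the finite-index condition on $\rst_G(n)$ and the infinitude of $G/G'$ do not conflict.
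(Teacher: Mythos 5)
Your proposal takes essentially the same route as the paper: the paper's one-line proof simply cites the non-just-infinite branch groups of \cite{me_branchgroups}, and the implicit engine is exactly the chain you spell out (infinite abelianisation $\Rightarrow$ no BCW via Lemma~\ref{lem_col}(\ref{lem_col_1}), combined with the \cite{grigor_bg} criterion showing such groups fall outside Theorem~\ref{thm_conj_width}). If anything you are more careful than the paper, whose proof reads ``not just infinite, hence cannot have BCW'' --- an implication that, taken literally, only goes through when the failure of just-infiniteness is witnessed by an infinite abelianisation, which is what the cited examples provide; your remaining worry about actually exhibiting such a group is resolved by the same citation the paper relies on.
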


\begin{proof}
The groups studied by the author in \cite{me_branchgroups} are not just infinite, hence cannot have BCW. 
\end{proof}

\medskip

At this moment, the Grigorchuk group is the only known branch group that satisfies all hypotheses of Theorem
\ref{thm_conj_width}. In particular it has been shown to have finite commutator width by I. Lysenok, A. Miasnikov and A. Ushakov
(\cite{ushakov}).

\begin{cor}
The Grigorchuk group has bounded conjugacy width.
\end{cor}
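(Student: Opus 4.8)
The plan is to observe that this corollary is an immediate consequence of Theorem \ref{thm_conj_width}, so the entire task reduces to verifying that the Grigorchuk group $\Gamma$ satisfies each of the three hypotheses of that theorem: that it is a just infinite branch group, that it contains a rooted element, and that it has finite commutator width. Once all three are checked, I would simply invoke Theorem \ref{thm_conj_width} to conclude.

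First I would dispatch the two structural hypotheses, both of which are essentially built into the definition of $\Gamma$. The fact that $\Gamma$ is a just infinite branch group is recorded in Proposition \ref{prop_grig_prop}(1), so nothing further is needed there. For the rooted element, I would point to the generator $a$: by construction $a$ acts on the binary tree by swapping the two top subtrees, which is precisely an element of $\Sym(l_0)=\Sym(2)$ acting only on the first level, i.e.\ a rooted automorphism in the sense defined in Section~2. Thus the hypothesis that $\Gamma$ contains a rooted element is witnessed explicitly by $a$, and in fact $a$ has word length $1$, which is convenient if one wishes to track the explicit constants furnished by Lemmas \ref{lemma_comm_K} and \ref{lemma_comm_G}.

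The only genuinely non-trivial ingredient is the finite commutator width hypothesis, which is where the real content lies. Here I would appeal to the result of I.\ Lysenok, A.\ Miasnikov and A.\ Ushakov in \cite{ushakov}, who established that every element of the derived subgroup $\Gamma'$ can be written as a product of a uniformly bounded number of commutators; this is exactly the statement that $\Gamma$ has finite commutator width. This fact is not something one proves from scratch in a line or two, so the honest thing is to cite it rather than reprove it, and it is the single external input on which the corollary rests.

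With all three hypotheses confirmed, the conclusion is automatic: Theorem \ref{thm_conj_width} applies verbatim to $G=\Gamma$ and yields that $\Gamma$ has bounded conjugacy width. I do not expect any obstacle in the deduction itself, since the theorem has been stated in exactly the generality needed; the main (and only) substantive step is the invocation of \cite{ushakov} for finite commutator width, and the remaining verifications are immediate from the definition of the Grigorchuk group.
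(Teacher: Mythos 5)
Your proposal is correct and matches the paper's own (implicit) argument exactly: the paper derives this corollary by noting that $\Gamma$ is a just infinite branch group containing the rooted element $a$ and has finite commutator width by the cited result of Lysenok, Miasnikov and Ushakov, and then applies Theorem \ref{thm_conj_width}. Nothing further is needed.
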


Computer experiments of L. Bartholdi have suggested that for the Grigorchuk group we might in fact have that every element of
$\Gamma'$ is a product of only four conjugates of the generator $a$. This would in particular imply that its commutator width is
at
most $2$ because \[a^xa^y = x^{-1}xy^{-1} [xy^{-1},a ]aa y = [xy^{-1},a]^y,\] which uses in particular that $a^2=1$.
This leads to the following open question:

\begin{question}
Does the Grigorchuk group $\Gamma$ have commutator width $2$? 
\end{question}

Because of the above computer experiments, if there exists an element $g \in \Gamma'$ which is not a product of $2$ commutators,
then
its length in the standard generators must be at least $17$.

\subsection{Palindromes}

A \emph{palindrome} is a group word which reads the same left-to-right as right-to-left. It has been studied over the last decade
by various authors whether a group has the property that every element is a product of uniformly boundedly many palindromes,
see \cite{bardakov_nilpotent}, \cite{bardakov_soluble}, \cite{bardakov}, \cite{me_palindromic}, \cite{rileySale}. This notion is
not
known to be independent of the generating set and many examples depend on a specific generating set. In some cases,
the question of bounded conjugacy width and finite palindromic width coincide:

\begin{lemma}
If a group $G$ has a generating set $X=\left\{x_1, \dots, x_n\right\}$ where every $x_i^2=1$, then the palindromes in $G$ with
respect to this generating set are exactly the conjugates of the generators $x_i$.
\end{lemma}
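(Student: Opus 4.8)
The plan is to exploit the fact that, in the presence of the relations $x_i^2=1$, reversing a word coincides with inverting the group element it represents; once this is recorded, both inclusions follow by decomposing a palindromic word about its centre. First I would fix notation for reversal: for a word $w = x_{i_1} x_{i_2} \cdots x_{i_k}$ in the generators, write $\bar w = x_{i_k} x_{i_{k-1}} \cdots x_{i_1}$ for the reversed word, so that by definition $w$ is a palindrome exactly when $w = \bar w$ as words. Since each generator satisfies $x_i^{-1}=x_i$, one has $\bar w = x_{i_k}^{-1}\cdots x_{i_1}^{-1} = (x_{i_1}\cdots x_{i_k})^{-1} = w^{-1}$ as elements of $G$. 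This single identity is what makes the lemma work.

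Next I would establish the inclusion ``conjugate of a generator $\Rightarrow$ palindrome''. Given a conjugate $g\, x_j\, g^{-1}$, choose a word $u = x_{i_1}\cdots x_{i_m}$ representing $g$. Using $x_i^{-1}=x_i$, the element $g^{-1}$ is represented by $\bar u = x_{i_m}\cdots x_{i_1}$, so $g\, x_j\, g^{-1}$ is represented by the word $x_{i_1}\cdots x_{i_m}\, x_j\, x_{i_m}\cdots x_{i_1}$, which is manifestly equal to its own reversal and hence a palindrome.

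For the reverse inclusion I would take an arbitrary palindromic word $w = x_{i_1}\cdots x_{i_k}$ and split on the parity of $k$. If $k = 2m+1$ is odd, the palindrome condition gives $w = u\, x_{i_{m+1}}\, \bar u$ with $u = x_{i_1}\cdots x_{i_m}$, and by the opening observation $\bar u = u^{-1}$ in $G$, so $w = u\, x_{i_{m+1}}\, u^{-1}$ is a conjugate of the generator $x_{i_{m+1}}$. If $k = 2m$ is even, the same decomposition yields $w = u\,\bar u = u u^{-1} = 1$. The even case is the only degeneracy: palindromes of even length collapse to the identity, so strictly the set of palindromes equals the set of conjugates of the $x_i$ together with the trivial element; I would flag this as a remark, since the identity is the empty product and contributes nothing to palindromic-width estimates.

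The main obstacle is conceptual bookkeeping rather than depth: one must keep ``palindrome'' as a combinatorial property of words (equal to its reversal) and translate it carefully into a statement about the represented group element. That translation is exactly where the hypothesis $x_i^2=1$ does all the work, converting reversal into inversion; after the middle-out decomposition, both inclusions are immediate.
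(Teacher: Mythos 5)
Your proof is correct and rests on exactly the same observation as the paper's one-line proof: since each $x_i$ is an involution, reversing a word inverts the group element it represents, after which the middle-out decomposition of a palindrome immediately gives both inclusions. Your added remark that even-length palindromes collapse to the identity is a legitimate small caveat to the word ``exactly'' that the paper glosses over, and it is harmless for the palindromic-width application.
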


\begin{proof}
We note that if every generator has order $2$, then taking inverses amounts to writing a word backwards.
\end{proof}

It has been
shown by A. Thom and the author in \cite{me_andreas}, that if a group is just infinite, then after a possibly slight
modification of
the generating set, it will have finite palindromic width. This modification in particular rules out that every generator has
order $2$. Here we prove that the Grigorchuk
group has
bounded conjugacy width, hence together with the result from
\cite{me_andreas} it follows that

\begin{cor}
The Grigorchuk group has finite palindromic width with respect to all generating sets.
\end{cor}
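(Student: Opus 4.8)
The plan is to handle an arbitrary finite generating set $X$ of $\Gamma$ by a case split according to whether or not every element of $X$ is an involution. These two cases are exhaustive, and each is handled by one of the two ingredients now available: the result of A. Thom and the author in \cite{me_andreas}, and the corollary just established that $\Gamma$ has bounded conjugacy width. The point of the split is that palindromic width is not known to be independent of the generating set, so each generating set must genuinely be addressed, and the all-involution sets behave quite differently from the rest.

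First I would treat the case where $X$ contains at least one generator $x$ with $x^2 \neq 1$. Since $\Gamma$ is just infinite, the theorem from \cite{me_andreas} gives finite palindromic width after a possibly slight modification of the generating set, where that modification serves only to ensure that not every generator has order $2$. As the present $X$ already has a non-involutory generator, its hypothesis is met without any modification, and one concludes directly that $\Gamma$ has finite palindromic width with respect to $X$.

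The remaining case, where every $x_i \in X$ satisfies $x_i^2 = 1$, is where bounded conjugacy width does the work. By the preceding lemma, the palindromes with respect to such a generating set are precisely the conjugates of the generators $x_i$. Bounded conjugacy width, which is independent of the generating set as observed earlier, provides a uniform $N$ so that every element of $\Gamma$ is a product of at most $N$ conjugates of the generators, hence a product of at most $N$ palindromes. Thus the palindromic width with respect to $X$ is at most $N$, and combining the two cases yields the claim for every finite generating set.

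The main point requiring care is the first case: one must verify that a single non-involutory generator already satisfies the hypothesis of \cite{me_andreas}, so that the permitted modification can be dispensed with and the conclusion applies to $X$ itself rather than to some altered set. Once that is granted, the remainder is the elementary observation that the two cases cover all finite generating sets, with the all-involution case supplied entirely by the lemma on palindromes together with the bounded conjugacy width of $\Gamma$.
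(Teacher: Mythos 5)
Your proposal is correct and is essentially the paper's own (largely implicit) argument: the dichotomy between generating sets consisting entirely of involutions --- handled by the preceding lemma together with bounded conjugacy width --- and those containing a non-involution --- handled by the just-infiniteness result of \cite{me_andreas} --- is exactly how the paper combines its two ingredients. The only point you flag as delicate, namely that a generating set already containing a non-involutory element needs no further modification for \cite{me_andreas} to apply, is likewise taken for granted in the paper, so your write-up matches it step for step.
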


For a more detailed study of palindromic width we recommend any of the papers mentioned above.

\renewcommand{\bibname}{References}
\bibliography{../Bibliography/bibliography}        

\medskip

Elisabeth Fink\\
DMA - ENS\\
45 rue d'Ulm\\
75005, PARIS\\
FRANCE\\
elisabeth.fink@oxon.org\\

\end{document}